\newtheorem{thm}{Theorem}[section]
\newtheorem{cor}[thm]{Corollary}
\newtheorem{prop}[thm]{Proposition}
\newtheorem{lem}[thm]{Lemma}
\theoremstyle{definition}
\newtheorem{notation}[thm]{Notation}
\theoremstyle{remark}
\def\vol[#1]{\mbox{\rm Vol}(#1)}
\def\lv[#1]{\mbox{\rm LinkVol}(#1)}
\def\vvv{V_{0}}
\def\wt{\widetilde}
\def\f21{F_{2,1}}
\title{The Link Volumes of some prism manifolds}
\author{Jair Remigio}
\address{Divisi\'on Acad\'emica de Ciencias B\'asicas\\Universidad Ju\'arez Aut\'onoma de Tabasco,\\\newline 
 Km. 1 Carr. Cunduac\'an-Jalpa de M\'endez\\Cunduac\'an, Tab. 86690\\Mexico}
\email{jair.remigio@ujat.mx}
\urladdr{}
\author{Yo'av Rieck}
\address{Department of Mathematical Sciences\\University of Arkansas,\\\newline
Fayetteville, AR, 72701\\USA}
\email{yoav@uark.edu}
\date{\today}                                           % Activate to display a given date or no date
\begin{document}

\begin{abstract}
We calculate the link volume of an infinite family of prism manifolds.  As a corollary, we show that the
link volume is not finite-to-one.
\end{abstract}

\maketitle

\section{Introduction}
\label{sec:intro}

In~\cite{ry} the second named author and Y. Yamashita defined the {\it link volume}, an invariant of closed orientable 3-manifolds
that measures how efficiently a given manifold can be represented as a branched cover of $S^{3}$.
We use the notation $M \stackrel{p}{\to} (S^3,L)$ to denote a covering projection from $M$ to $S^{3}$, branched along $L$
and of degree $p$.  We restrict to the case where $L$ is a hyperbolic link.  Then the {\it complexity}
of $M \stackrel{p}{\to} (S^3,L)$ is defined to be $p \vol[S^{3} \setminus L]$, that is, the degree of the cover
times the volume of the complement of the branch set.
The link volume of a closed orientable 3-manifold $M$ is denoted $\lv[M]$ and defined to be the infimum of the complexities of all
covers (of all possible degrees) $M \stackrel{p}{\to} (S^3,L)$,  that is:
$$\lv[M] = \inf \{ p \vol[S^3 \setminus L] | M \stackrel{p}{\to} (S^3,L); L \mbox{ hyperbolic}\}.$$
In~\cite{ry} the basic properties of link volume are explored, and it is shown that many of these properties are similar to the
corresponding properties of the hyperbolic volume.  However, the link volume seems quite challenging to calculate in general.  

In this paper we calculate the link volume of all but finitely many members
of an infinite family of prism manifolds denoted $M_{n}$.  $M_{n}$ is defined to be the 2-fold cover of $S^{3}$ branched
along the link $L_{n}$, see Figure~\ref{fig:LnSimple}.  

\begin{figure}[h!]
\centering
	\includegraphics[height = 1in]{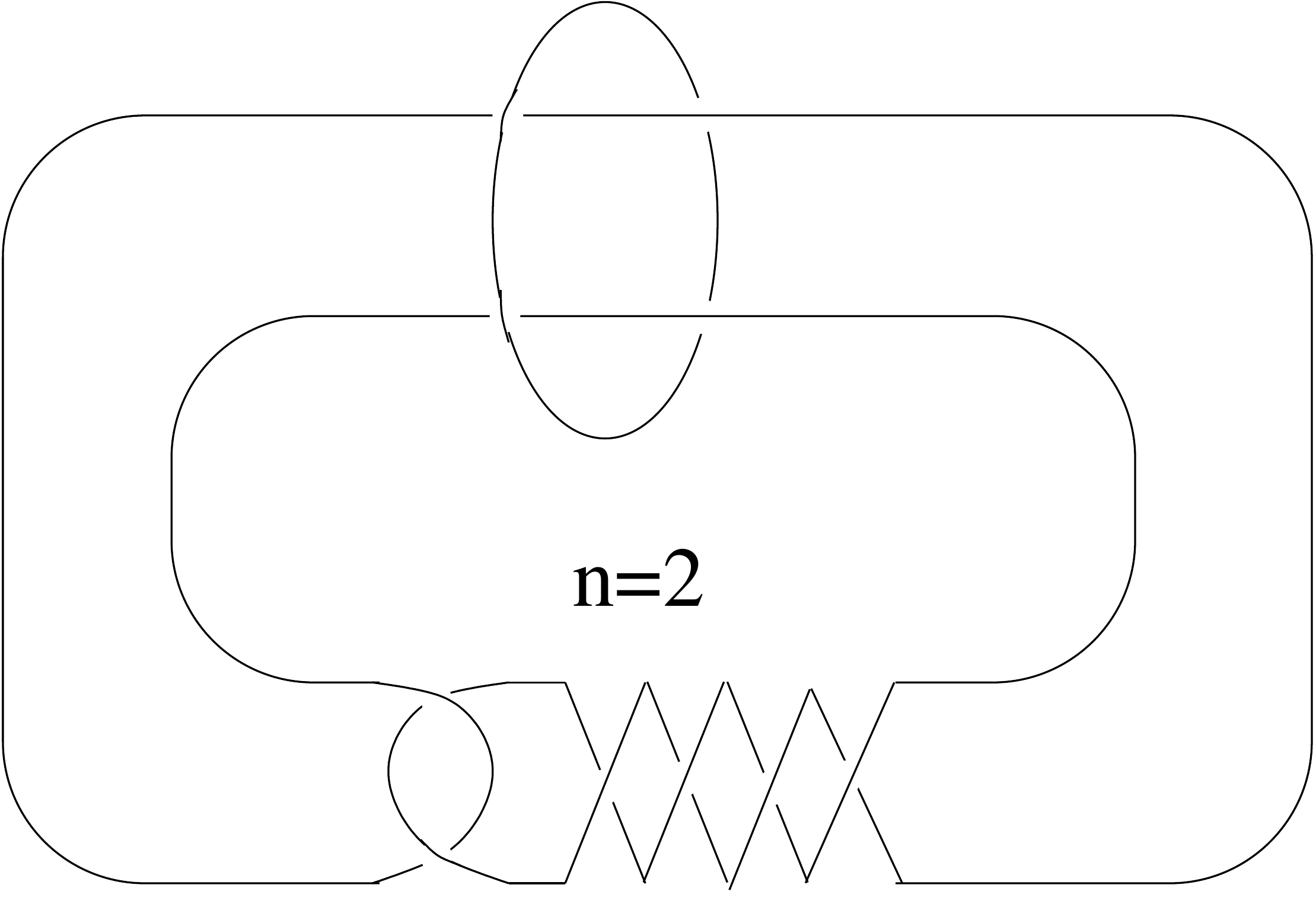}
	\caption{$L_n$}
	\label{fig:LnSimple}
	\end{figure}

The manifolds $M_{n}$ are all 
Seifert manifolds with finite fundamental group.  Each $M_{n}$ admits exactly two Seifert fibrations, one with Seifert symbols $(On,1;\frac{4n-1}{2})$
and the other with Seifert symbols $(Oo,0;\frac{1}{2}, -\frac{1}{2}, \frac{-2}{4n-1})$.  For details and other background material see Section~\ref{sec:background}.
Let $\vvv(=3.66\ldots)$ denote the volume of the Whitehead link exterior.  Note that the Whitehead link is $L_{0}$, and since 
$S^{3} \setminus L_{n} \cong S^{3} \setminus L_{0}$, $\vol[S^{3} \setminus L_{n}] = \vvv$ for all $n$.
In~\cite{agol} I. Agol proved that  $S^{3} \setminus L_{0}$ and $S^{3} \setminus P$ are the 2-cusped hyperbolic
manifolds of smallest volume, where $P$ is the $-2,3,8$ pretzel link.  Using this result we obtain:

\begin{thm}
\label{thm:main}
For all but finitely many $n$, $\lv[M_{n}] = 2\vvv$.
\end{thm}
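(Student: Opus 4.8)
The plan is to prove the equality by establishing the two inequalities separately. The upper bound is immediate from the definition of $M_n$: the branched cover $M_n \stackrel{2}{\to}(S^3,L_n)$ has $L_n$ hyperbolic with $\vol[S^3\setminus L_n]=\vvv$, so this cover alone witnesses $\lv[M_n]\le 2\vvv$. For the lower bound I would suppose, toward a contradiction, that some cover $M_n\stackrel{p}{\to}(S^3,L)$ has complexity $p\,\vol[S^3\setminus L]<2\vvv$, and eliminate every such possibility.

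The first step is to bound the degree. Since any hyperbolic link complement is a cusped orientable hyperbolic $3$-manifold, its volume is at least that of the figure-eight knot complement ($\approx 2.03$); as $2\vvv\approx 7.32$, this forces $p\le 3$, and $p=1$ is impossible because it would give $M_n\cong S^3$. The engine for the two remaining degrees is a single homology computation: abelianizing the Seifert presentation associated to $(Oo,0;\frac12,-\frac12,\frac{-2}{4n-1})$ yields $|H_1(M_n)|=8$ for every $n$, a value that is both even and different from $16$.

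For $p=2$, if $L$ were a knot then $M_n$ would be its double branched cover, whose first homology has odd order (the knot determinant), contradicting $|H_1(M_n)|=8$; hence $L$ has at least two components, and Agol's theorem gives $\vol[S^3\setminus L]\ge\vvv$, so the complexity is at least $2\vvv$, the desired contradiction. For $p=3$, the inequality $\vol[S^3\setminus L]<\frac{2}{3}\vvv<2.44$ forces $L$ to be the figure-eight knot, the only hyperbolic link in $S^3$ of volume below that threshold. I would then observe that $4_1$, having determinant $5$, is not tricolorable, so its knot group admits no surjection onto $S_3$ with meridians as transpositions; the only transitive representation into $S_3$ therefore has image $\mathbb{Z}/3$, producing the cyclic triple branched cover, whose first homology has order $16\ne 8$. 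This eliminates $p=3$ and completes the lower bound.

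The step I expect to be most delicate is $p=3$. For $p=2$ the parity of $|H_1|$ instantly excludes all knots, but no such shortcut is available for $p=3$: one must enumerate the branched covers of the figure-eight representation-theoretically and confirm that no hyperbolic link other than $4_1$ slips in under the volume bound $2.44$. The hypothesis ``all but finitely many $n$'' should absorb the small or degenerate values, in particular those with $4n-1\le 0$, where the Seifert data, and hence the homology count $|H_1|=8$, no longer applies.
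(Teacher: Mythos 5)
Your split into degrees $p=2$ and $p=3$ is sound, and your $p=2$ argument is correct and substantially simpler than the paper's. The paper spends Sections~\ref{sec:Pn} and~\ref{sec:SmallVolumeKnots} and all of Lemma~\ref{lem:NotDoubleCover} on this case: it first shows (Proposition~\ref{prop:SmallVolumeKnots}) that all but finitely many hyperbolic links of volume less than $\vvv$ are twist knots $W_n$ or twisted torus knots $P_n$, and then rules out the $P_n$ by a lengthy analysis of the fibered double cover $\wt V$, the Thurston--Nielsen trichotomy for its monodromy, and Euler characteristic counts against the two Seifert fibrations of $M_n$. Your homological shortcut replaces all of this: the computation $|H_1(M_n)|=8$ is correct (the presentation matrix of the abelianized Seifert presentation of $(Oo,0;\frac{1}{2},-\frac{1}{2},\frac{-2}{4n-1})$ has determinant $8$ for every integer $n$; equivalently $|H_1| = |\alpha_1\alpha_2\alpha_3 \cdot e| = |2\cdot 2\cdot(4n-1)\cdot\frac{-2}{4n-1}| = 8$), while the double cover of $S^3$ branched over a knot $K$ is unique and has $|H_1| = |\Delta_K(-1)|$, which is odd; links of two or more components are killed by Agol. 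This eliminates \emph{every} knot at once, with no classification of small-volume knots needed, and in fact proves the degree-2 case for all $n$, not just all but finitely many. Incidentally, your worry about degenerate $n$ is unnecessary: the determinant computation gives $8$ for every $n\in\mathbb{Z}$, including $4n-1<0$.

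The gap is in your $p=3$ case. You assert that $\vol[S^{3}\setminus L] < \frac{2}{3}\vvv \approx 2.44$ forces $L$ to be the figure-eight knot, ``the only hyperbolic link in $S^3$ of volume below that threshold.'' That statement is true, but it does not follow from Cao--Meyerhoff, which only identifies the minimum volume ($\approx 2.0299$) and gives no information about the spectrum above it; a priori there could be many (even infinitely many) knot complements with volume in $(2.03,\,2.44)$. Ruling them out requires the Gabai--Meyerhoff--Milley census of low-volume cusped hyperbolic manifolds (the second smallest one-cusped volume is $\approx 2.5689$), a much heavier tool than anything the paper uses, and one your proposal neither cites nor proves. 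The rest of your $p=3$ analysis is fine: for the figure-eight, meridians all have the same cycle type in $S_3$, transpositions would give a $3$-coloring (impossible since $\det(4_1)=5$), so only the cyclic cover survives, and $|H_1(\Sigma_3(4_1))| = |\Delta(\omega)|^2 = 16 \neq 8$. Note, however, that you do not actually need to identify the link: since the theorem claims only ``all but finitely many $n$,'' it suffices that the set of hyperbolic links of volume less than $\frac{2}{3}\vvv$ is finite --- the paper's route, via J{\o}rgensen--Thurston, Gordon--Luecke, and Agol --- together with the fact that each such link has only finitely many branched covers of degree at most $3$ (Lemma~\ref{lem:FiniteleManyCovers}). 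With that substitution, or with an explicit appeal to the census result, your proof closes; as written, the $p=3$ step rests on an unjustified claim.
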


It is well known that the  hyperbolic volume function is finite to one.  In~\cite{ry} it was asked whether the link volume
is finite to one as well.  By Lemma~\ref{lem:Mn} the manifolds $M_{n}$ are
all distinct; combining these results we obtain:

\begin{cor}
There exist infinitely many distinct manifolds with the same link volume.
\end{cor}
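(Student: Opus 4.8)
The plan is to derive the corollary directly from Theorem~\ref{thm:main} together with Lemma~\ref{lem:Mn}; no new geometric or topological input is required beyond these two facts. First I would fix notation for the finitely many exceptions: let $E \subset \mathbb{Z}$ denote the finite set of indices excluded by the phrase ``all but finitely many'' in Theorem~\ref{thm:main}, so that $\lv[M_{n}] = 2\vvv$ holds for every $n \notin E$. Because $E$ is finite, the index set $\mathbb{Z} \setminus E$ is infinite, and hence the subfamily $\{M_{n} : n \notin E\}$ consists of infinitely many manifolds, each having link volume exactly $2\vvv$.

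The second ingredient is to ensure these manifolds are genuinely pairwise distinct, so that we obtain infinitely many manifolds rather than infinitely many branched-cover presentations of finitely many manifolds. This is exactly the content of Lemma~\ref{lem:Mn}, which asserts that the $M_{n}$ are mutually non-homeomorphic. Restricting that pairwise distinctness to the cofinite index set $\mathbb{Z} \setminus E$ still yields an infinite collection of distinct manifolds. Combining the two facts, $\{M_{n} : n \notin E\}$ is an infinite family of pairwise distinct closed orientable 3-manifolds all sharing the single link-volume value $2\vvv$, which is precisely the assertion of the corollary (and, as noted in the introduction, shows that $\lv[\cdot]$ is not finite-to-one).

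I do not expect a genuine obstacle in this step, since the difficulty of the paper is concentrated in establishing Theorem~\ref{thm:main} and Lemma~\ref{lem:Mn}. The one conceptual point worth stating explicitly is that the argument needs \emph{both} inputs simultaneously: equal link volume without distinctness would be vacuous, and distinctness without a common link-volume value would say nothing about finite-to-oneness. The only mild subtlety is that the two results constrain different index sets --- Theorem~\ref{thm:main} excludes a finite set $E$ while Lemma~\ref{lem:Mn} applies to all $n$ --- so I would simply intersect the relevant ranges and observe that deleting the finite set $E$ cannot destroy infinitude, leaving the conclusion insensitive to which particular $M_{n}$ fail to have link volume $2\vvv$.
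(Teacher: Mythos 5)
Your proposal is correct and is essentially identical to the paper's own argument: the paper likewise obtains the corollary immediately by combining Theorem~\ref{thm:main} (link volume $2\vvv$ for all but finitely many $n$) with Lemma~\ref{lem:Mn} (the $M_n$ are pairwise distinct). Your explicit handling of the finite exceptional set is just a careful spelling-out of the same one-line deduction.
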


After going over background material in Section~\ref{sec:background}, in Section~\ref{sec:Pn} we prove
(Proposition~\ref{prop:Pn}) that the knots that are obtained from the $-2,3,8$ pretzel link
via Dehn surgery on the unknotted component
are all {\it twisted torus knots} (see Subsection~\ref{subsection:notation} for definition)
of the form  $T(5,5n+1;2,1)$.
In Section~~\ref{sec:SmallVolumeKnots} we prove Proposition~\ref{prop:SmallVolumeKnots},
which is of independent interest.  In it we show
that all but finitely many of the knots that have volume less than $\vvv$ are either twist knots
or $T(5,5n+1;2,1)$ twisted torus knots.
Finally, in Section~\ref{sec:proof}, we prove Theorem~\ref{thm:main}.

\bigskip

\noindent{\bf Acknowledgments.}  We thank Kimihiko Motegi and Yasushi Yamashita for helpful 
conversations.  The first named author was supported by CONACYT Postdoctoral Fellowships Program and thanks the Department of Mathematical Sciences-University of Arkansas for the facilities provided to carry out this project.

\section{Background}
\label{sec:background}

By {\it manifold} we mean 3-dimensional manifold.
Unless otherwise stated, all manifolds and surfaces considered are assumed to be connected and orientable.
In addition, every manifold is assumed to be compact or obtained from a compact manifold
by removing some of its boundary components.  
A {\it knot} is a smooth embedding of $S^{1}$ into $S^{3}$.  A {\it link} is collection of disjoint knots.

A closed orientable surface of positive genus embedded in a 3-manifold is called {\it incompressible}
if the inclusion map induces a monomorphism between the fundamental groups.  An incompressible
surface is called {\it essential} if it is not boundary parallel.  A manifold is called
{\it a-toroidal} if it does not admit an essential torus.  A surface with non-empty boundary
properly embedded in a manifold is called {\it essential} if it is incompressible, boundary
incompressible and not boundary parallel.  For details about this and other standard notions in 3-manifold
topology see, for example,~\cite{hempel} or~\cite{jaco}.

\subsection{Notation}
\label{subsection:notation}

Throughout this paper, we use the following notation: the Whitehead link is denoted $W$, 
see Figure~\ref{fig:whitehead}.  The 2-component
link obtained by $n$ Dehn twist on one of the components of $W$ 
is denoted $L_{n}$.  The knot obtained 
from one component of the $W$ by $1/n$ Dehn surgery about the other component is called a {\it twist
knot}, denoted $W_{n}$, see Figure~\ref{fig:TwistKnot}.  The $-2,3,8$ Pretzel link is denoted $P$, see Figure~\ref{fig:PretzelLink}.  
Note that one component of $P$ is unknotted
and the other is a trefoil.  The knot obtained by $1/n$ Dehn surgery on the unknotted component of $P$ is
denoted by $P_{n}$.  The $p/q$ torus knot is denoted $T(p,q)$.  The {\it twisted torus knot} $T(p,q;r,s)$ is the
knot obtained from $T(p,q)$ by performing $s$ full Dehn twists on $r$ strands; see Figure~\ref{fig:ttk}.
%Note that a twisted torus knot is naturally embedded in an unknotted solid torus.
	\begin{figure}[h!]
\centering
	\includegraphics[height = 1in]{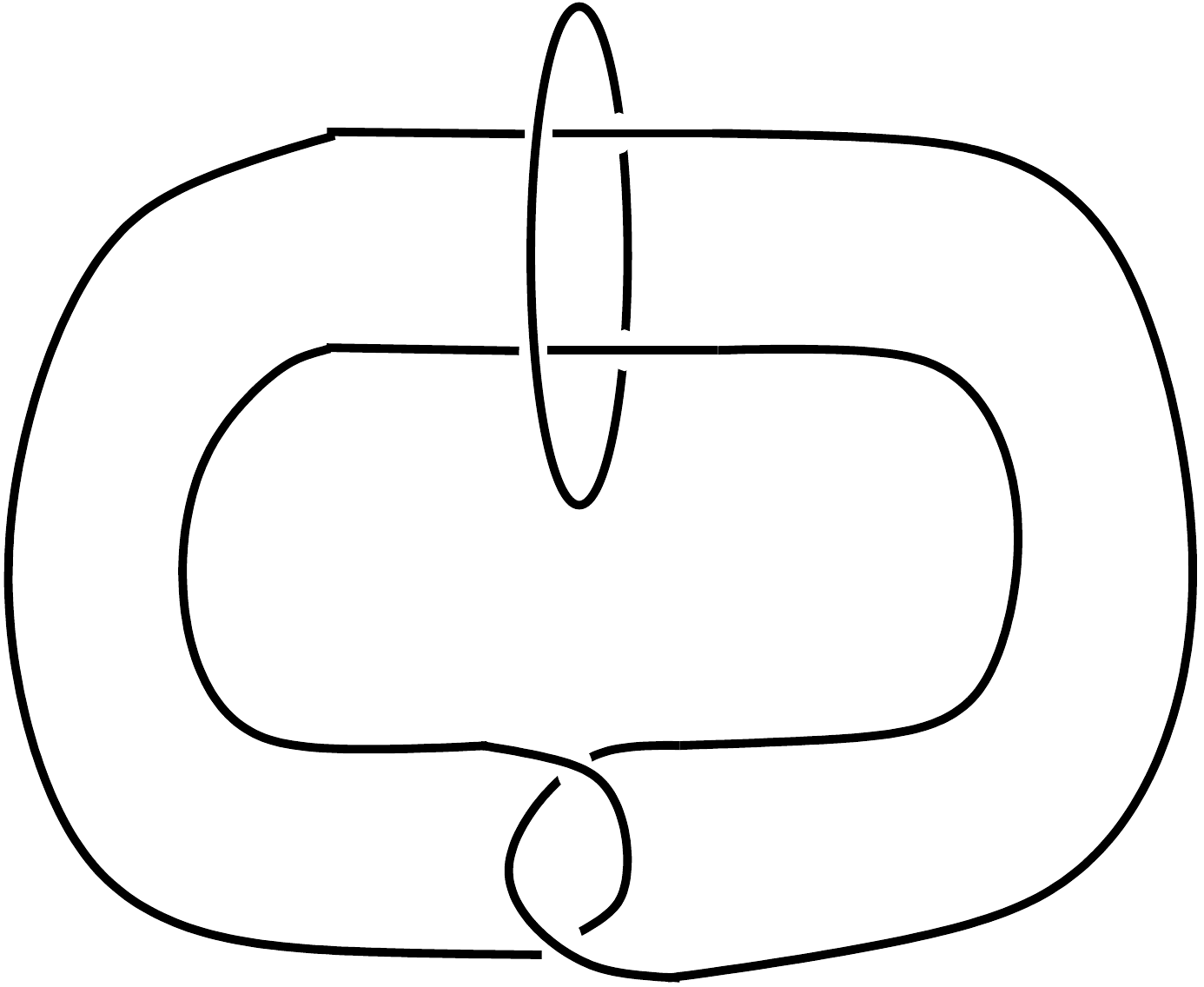}
	\caption{$W$}
	\label{fig:whitehead}
	\end{figure}

	\begin{figure}[!h]
\centering
	\includegraphics[height = 1in]{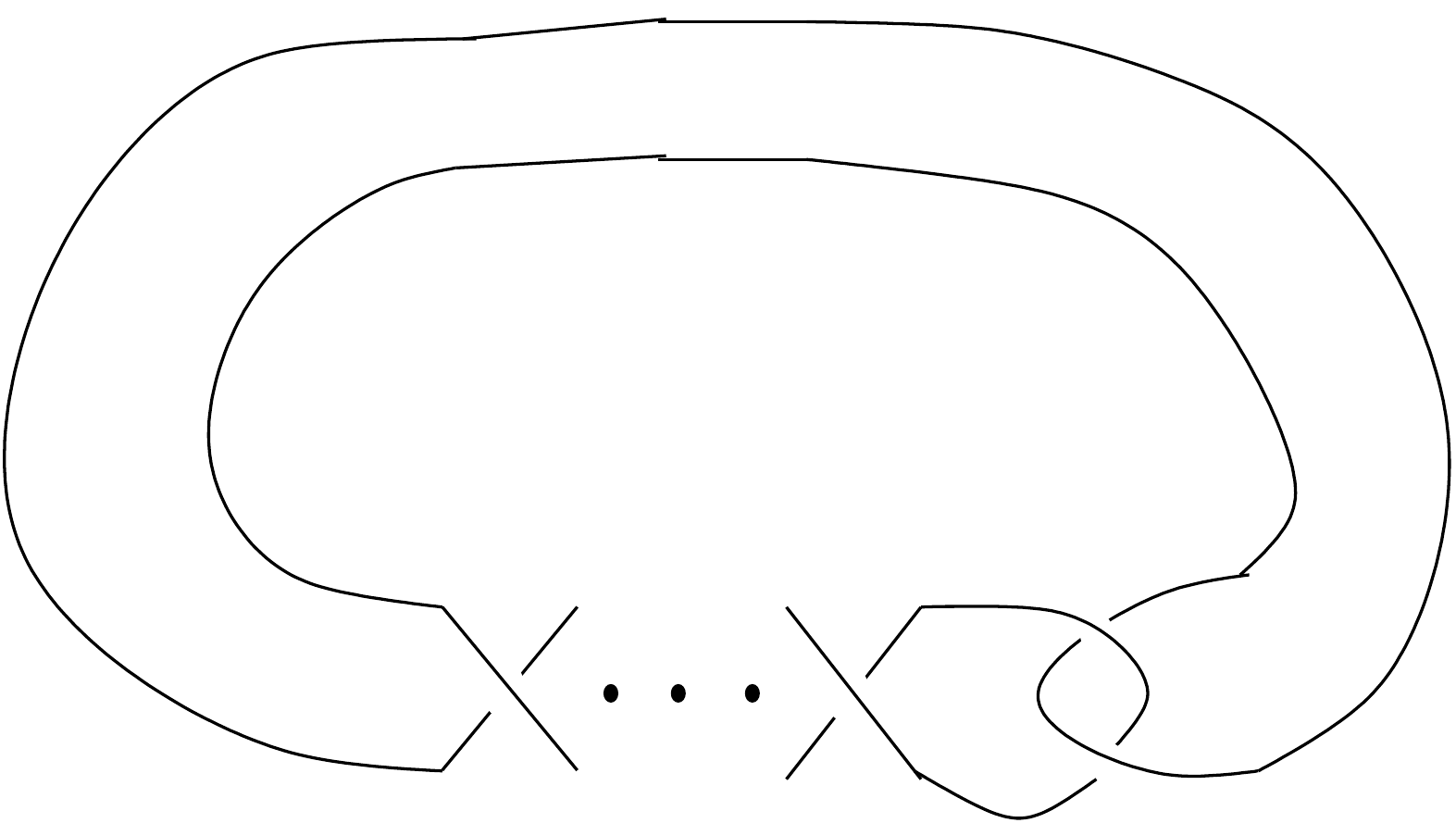}
	\caption{Twist  Knot $W_{n}$}
	\label{fig:TwistKnot}
	\end{figure}	

	\begin{figure}[h!]
\centering
	\includegraphics[height = 1in]{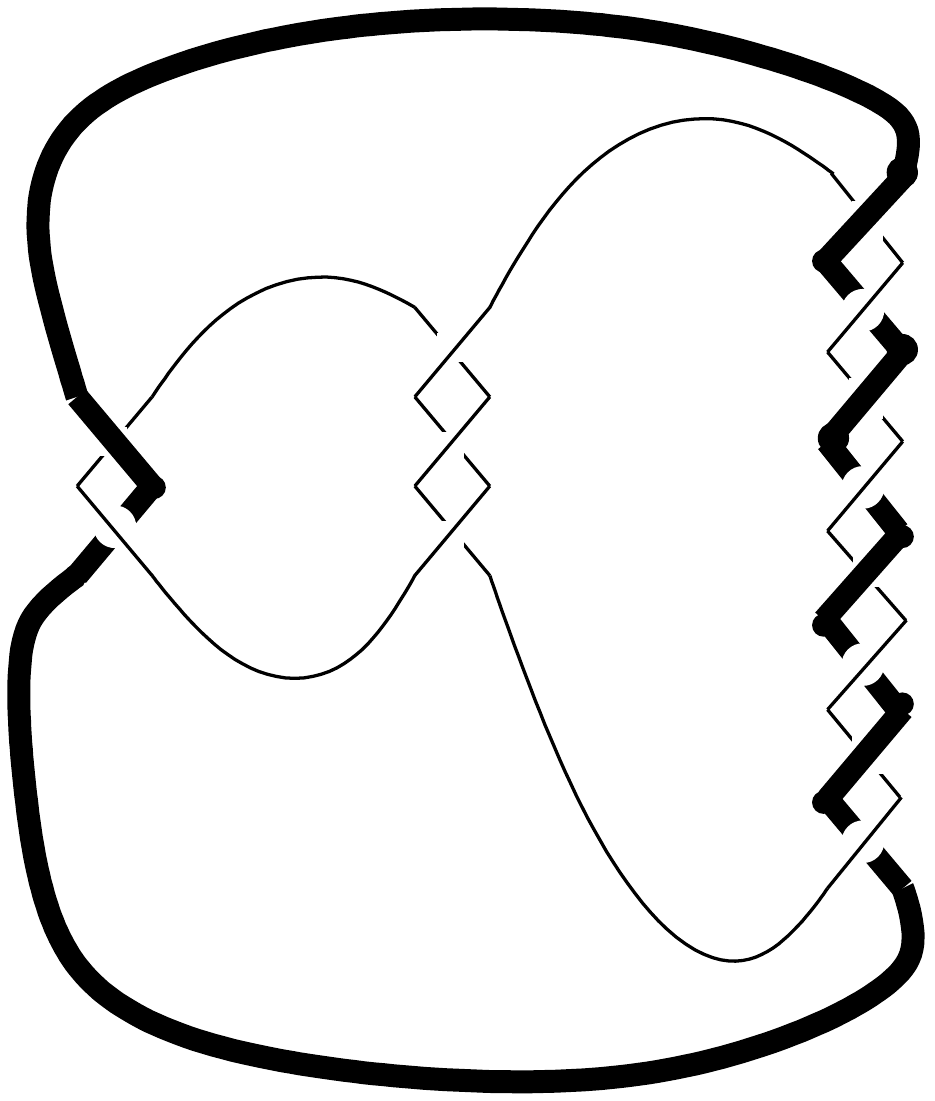}
	\caption{$P$}
	\label{fig:PretzelLink}
	\end{figure}

	\begin{figure}[h!]
\centering
	\includegraphics[height = 3in]{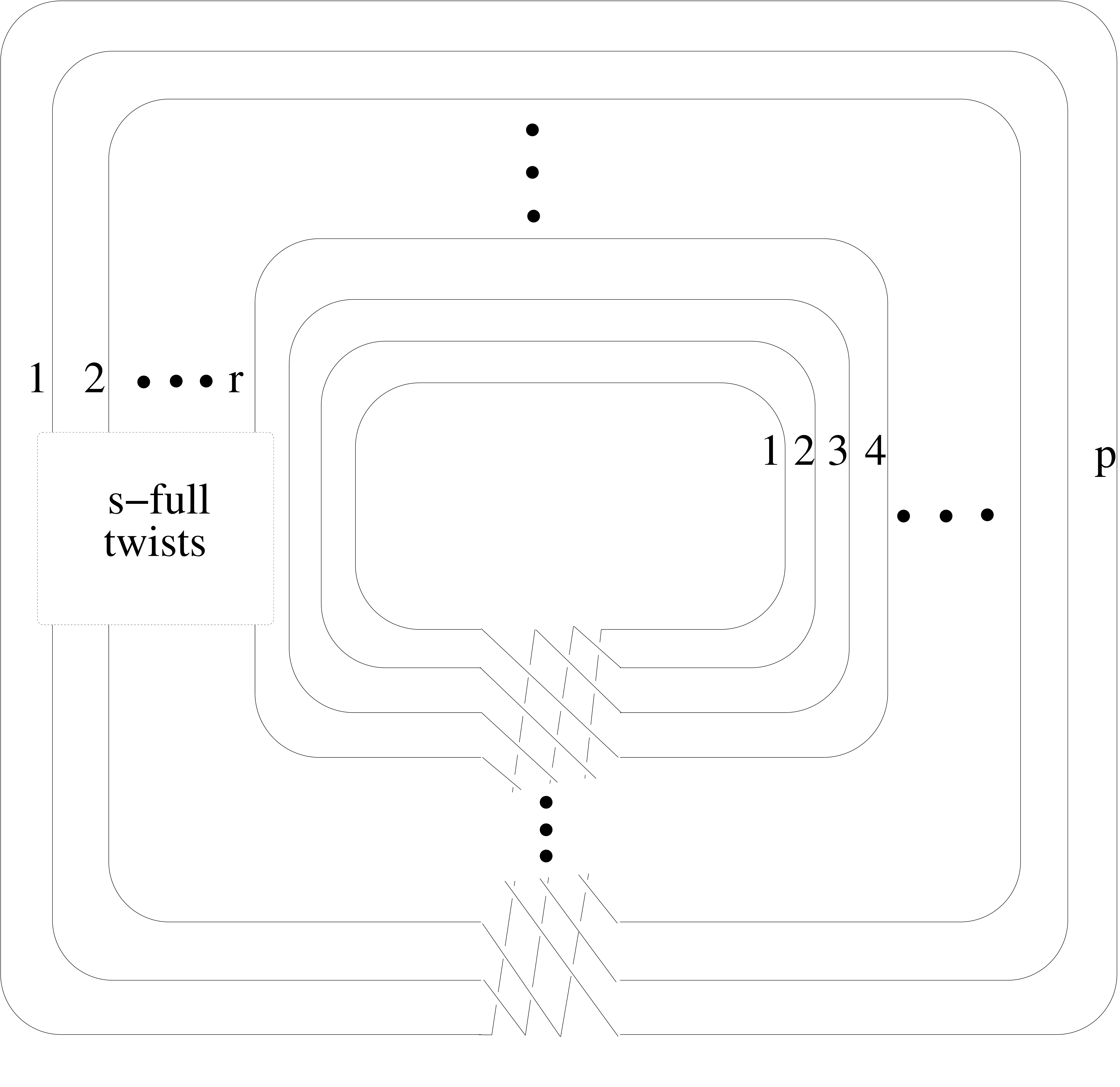}
	\caption{$T(p,q;r,s)$}
	\label{fig:ttk}
	\end{figure}

\subsection{Branched covers}

We assume familiarity with the concept of branched covers; see, for example,~\cite{fox}.
For convenience of the reader we bring the basic definitions and facts here. 
Let $F_{1}$ and $F_{2}$ be closed surfaces.
A map $p:F_{1} \to F_{2}$ is called a {\it branched cover} if it is onto and at every point
$x \in F_{1}$, $p$ is  modeled on the map $D^{2} \to D^{2}$ given by $z \mapsto z^{d}$ 
(for some $d$). with $x$ corresponding to $0$.  Here and throughout this paper 
$D^{2} \subset \mathbb C$ is the unit disk. The number $d$ is called the {\it local degree} at $x$; %cambie aqui
note that the local degrees at different preimages of the same point of $F_{2}$ need not be the same. 
It is easy to see that the set of points of local degree not equal to one is finite.
The image of this set is called the {\it branch set}.

Let $L \subset S^{3}$ be a link.  A map $p:M \to S^{3}$ is a {\it cover branched over $L$} if 
$p|_{p^{-1}(S^{3} \setminus L)}: p^{-1}(S^{3} \setminus L) \to S^{3} \setminus L$ is a cover,
and every $x \in p^{-1}(L)$ has a neighborhood $U$, parametrized as $D^{2} \times (0,1)$, with 
$L \cap U$ corresponding to $\{0\} \times (0,1)$, and $p|_U$
is modeled on the map $D^{2} \times (0,1) \to D^{2} \times (0,1)$ given by
$(z,t) \mapsto (z^{d},t)$.  $d$ is called the {\it local degree} at $x$; note that the local degrees at different
preimages of the same point of $L$ need not be the same.  The {\it degree} of $p$ is defined to be the 
degree of  $p|_{p^{-1}(S^{3} \setminus L)}$.

It is well known (see, for example, Fox~\cite{fox}) that if $M$ is a cover of $S^{3}$ 
branched over $L$ and of degree $d$, then 
$M$ is determined by a representation of $\pi_{1}(S^{3} \setminus L)$ into $S_{d}$, the
group of permutation on $d$ elements.  Finite generation of $\pi_{1}(S^{3} \setminus L)$
and finiteness of $S_{d}$ imply the following well known fact:

\begin{lem}
\label{lem:FiniteleManyCovers}

For given link $L\subset S^{3}$ and integer $d$, there are only finitely many manifolds $M$
that are covers of $S^{3}$, branched over $L$ and of degree at most $d$.
\end{lem}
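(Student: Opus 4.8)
The plan is to package the three ingredients that were just assembled: the Fox correspondence between degree-$d'$ branched covers and representations (that is, homomorphisms) of the link group into $S_{d'}$, the finite generation of $\pi_1(S^3 \setminus L)$, and the finiteness of each symmetric group $S_{d'}$. Together these show that only finitely many representations arise, hence only finitely many manifolds.

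First I would reduce to a single degree. A cover of degree at most $d$ has some degree $d'$ with $1 \le d' \le d$, and a finite union of finite sets is finite; so it suffices to bound, for each fixed $d'$, the number of manifolds arising as degree-$d'$ covers of $S^3$ branched over $L$. Fixing such a $d'$, I would invoke the correspondence recalled above (following Fox~\cite{fox}): every such $M$ is determined by a representation $\rho \colon \pi_1(S^3 \setminus L) \to S_{d'}$. Note that I only need an \emph{upper} bound on the number of distinct manifolds; distinct representations may well yield homeomorphic manifolds, but this overcounting is harmless.

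The counting step is then immediate. Choose a finite generating set $g_1, \dots, g_k$ for $\pi_1(S^3 \setminus L)$ (finite generation was noted above). A homomorphism into $S_{d'}$ is completely determined by the images of $g_1, \dots, g_k$, and each generator has at most $|S_{d'}| = d'!$ possible images. Hence there are at most $(d'!)^k$ representations, and therefore at most $(d'!)^k$ manifolds of degree $d'$; summing over $1 \le d' \le d$ gives a finite bound and completes the argument.

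I do not expect any genuine obstacle here: the statement is essentially a repackaging of the facts recorded just before it. The only point meriting care is the observation that bounding the number of representations bounds the number of manifolds, despite the correspondence being many-to-one — which is precisely why an upper bound, rather than an exact count, is all that the argument requires.
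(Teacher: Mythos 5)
Your proof is correct and is essentially the same as the paper's: the paper derives the lemma directly from the Fox correspondence between branched covers and representations $\pi_1(S^3\setminus L)\to S_d$, together with finite generation of $\pi_1(S^3\setminus L)$ and finiteness of $S_d$, which is exactly your counting argument. Your additional remarks (summing over degrees $d'\le d$, and that the many-to-one nature of the correspondence only helps an upper bound) are routine elaborations of the same idea.
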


\subsection{Seifert manifolds}

We assume the reader is familiar with Seifert manifold, that is, circle bundles over orbifolds.   
An orientable Seifert manifold with a given fibration is determined by its {\it Seifert symbols} as follows: 
$(Oy,g;\frac{\beta_1}{\alpha_1},\dots,\frac{\beta_r}{\alpha_r})$ (where $y = o$ or $y = n$
and all other letters represent integers) is the orientable Seifert manifold over
the orientable surface of genus $g \geq 0$ (if $y=o$) or non-orientable surface with $g \geq 1$ cross caps (if $y=n$); each 
fraction $\frac{\beta_{i}}{\alpha_{i}}$ represents a fiber with multiplicity $\alpha_{i}$
(so if $\alpha_{i} = \pm 1$ the fiber is regular and it is exceptional otherwise; all exceptional fibers
must be listed).  The same Seifert manifold can be represented using Seifert symbols 
in infinitely many ways and we refer the reader to Seifert's original
paper~\cite{seifert}  for details.  In particular, we may reorder the exceptional fibers.

\subsection{Montesinos links}

A Montesinos link is a link $L \subset S^3$ that has the form given in Figure~\ref{fig:montesinos}.
	\begin{figure}[h!]
\centering
	\includegraphics[height = 2in]{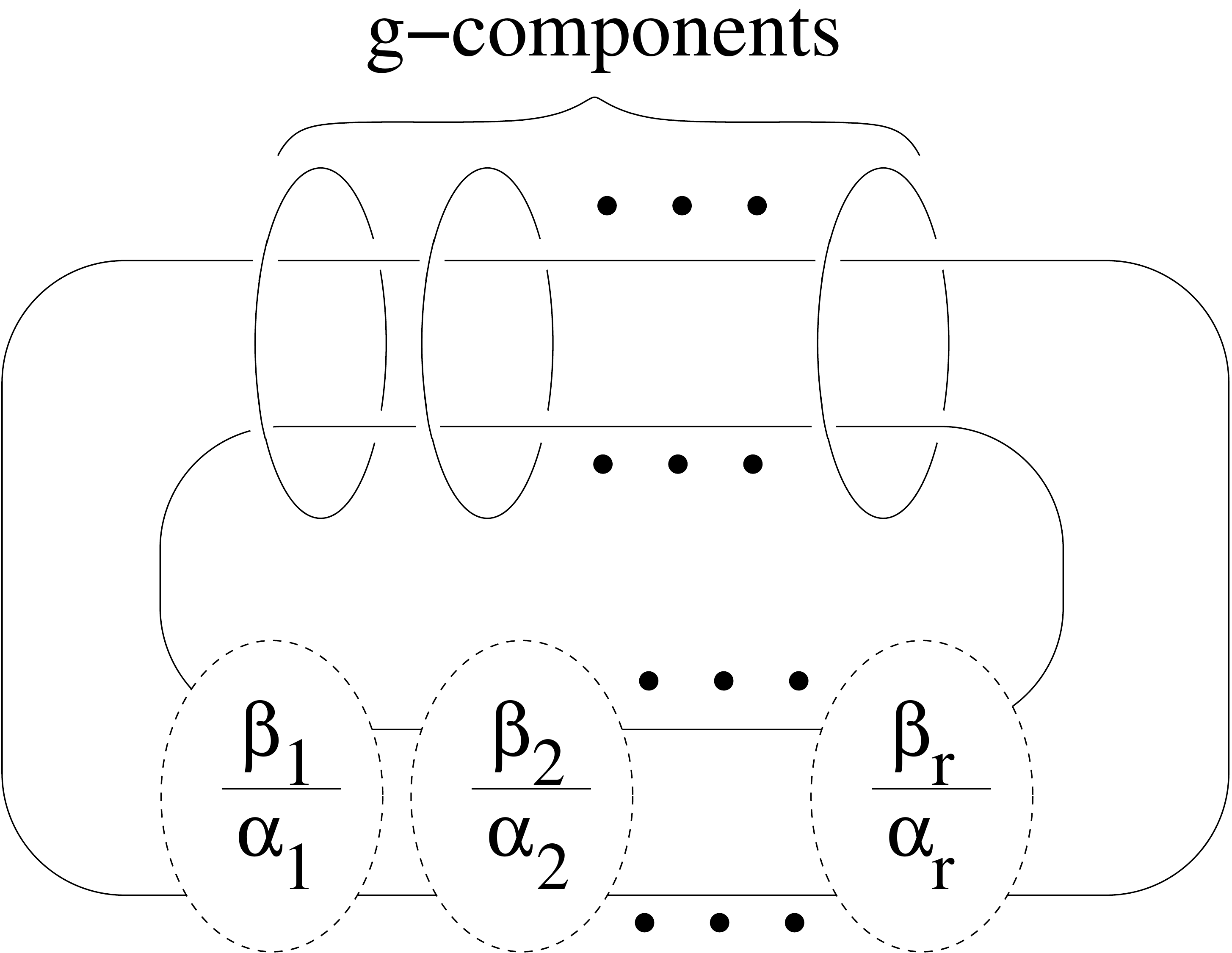}
	\caption{A Montesinos link}
	\label{fig:montesinos}
	\end{figure}
In that figure, $\frac{\beta_i}{\alpha_i}$ represents the rational tangle of that slope.  
We refer the Spanish speaking reader to the original paper of Montesinos~\cite{montesinos},
or~\cite[Section~4.7]{montesinosbook}.  There Montesinos proves the following:

\begin{lem}
\label{lem:montesinos}
The double cover of $S^3$ branched over the Montesinos link $L$ is a Seifert manifold with the following Seifert symbols:
  \begin{enumerate}
   \item When $g=0$: $(Oo,0;\frac{\beta_1}{\alpha_1},\dots,\frac{\beta_r}{\alpha_r})$.
   \item When $g>0$: $(On,g;\frac{\beta_1}{\alpha_1},\dots,\frac{\beta_r}{\alpha_r})$.
  \end{enumerate}
\end{lem}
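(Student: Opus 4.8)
The plan is to compute the double branched cover by decomposing $(S^3,L)$ into elementary pieces, taking the double branched cover of each piece separately, and then reassembling; throughout I would exploit the classical correspondence between rational tangles and slopes on a torus.

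First I would recall the basic building block: the double cover of $(B^3,T)$ branched over a rational tangle $T$ of slope $\frac{\beta_i}{\alpha_i}$. The boundary $(S^2, T\cap\partial B^3)$ is a $2$-sphere with four marked points, and its double branched cover is a torus (the ``pillowcase''). Under this cover, isotopy classes of rational tangles filling $B^3$ correspond bijectively to slopes on this torus, and inserting the tangle of slope $\frac{\beta_i}{\alpha_i}$ lifts to Dehn filling the torus along the corresponding slope; the resulting cover of the ball is a solid torus. I would fix, once and for all, a basis $(\mu,\lambda)$ on the pillowcase torus so that the $0$- and $\infty$-tangles fill along the two basis curves, and then track that the $\frac{\beta_i}{\alpha_i}$-tangle fills along the curve $\beta_i\mu+\alpha_i\lambda$. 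This is what produces an exceptional fiber of multiplicity $\alpha_i$.

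Next I would cut the Montesinos diagram of Figure~\ref{fig:montesinos} along Conway spheres, separating the $r$ rational-tangle balls from a complementary ``base'' piece that records the pattern by which the tangles are strung together. The double branched cover of this base piece is a circle bundle over a surface-with-boundary: the $S^1$ fiber is the lift of the distinguished branch circle, and the base surface is a sphere with $r$ holes when the connecting pattern is orientation-preserving and a non-orientable surface with $g$ cross-caps when the diagram contains the half-twisted connections recorded by $g$. Establishing precisely which configurations yield the orientable base (case $g=0$, symbol $Oo$) versus the non-orientable base (case $g>0$, symbol $On$) is the step that requires the most care, since it amounts to analyzing how the covering involution acts on, and reverses the orientation of, the base.

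Finally I would glue the $r$ solid tori to the circle bundle along their common boundary tori, checking that the regular fiber of the bundle matches the longitude $\lambda$ of each solid torus and that the meridian of the $i$-th solid torus is the curve $\beta_i\mu+\alpha_i\lambda$ fixed above; reading off the Seifert invariants from these gluings then yields exactly the symbols $(Oo,0;\frac{\beta_1}{\alpha_1},\dots,\frac{\beta_r}{\alpha_r})$ and $(On,g;\frac{\beta_1}{\alpha_1},\dots,\frac{\beta_r}{\alpha_r})$. I expect the main obstacle to be bookkeeping: making the slope conventions on all the pillowcase tori mutually consistent so that the fractions emerge with the correct signs and normalization, while simultaneously pinning down the orientability of the base. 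Once coordinates are chosen coherently across all pieces, the Seifert data can be read off directly.
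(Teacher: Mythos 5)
The paper itself contains no proof of this lemma: it is stated as Montesinos's theorem, attributed to the original paper~\cite{montesinos} (see also~\cite[Section~4.7]{montesinosbook}), and used as a known ingredient. So there is no internal argument to compare yours against line by line. What you have written is, in outline, the classical tangle-decomposition proof on which Montesinos's result rests: the double cover of a ball branched over a rational tangle is a solid torus; under the pillowcase cover $T^2 \to S^2$ branched over four points, tangle slopes correspond to filling slopes, so the $\frac{\beta_i}{\alpha_i}$-tangle contributes a solid torus glued along $\beta_i\mu+\alpha_i\lambda$, i.e.\ an exceptional fiber of multiplicity $\alpha_i$; the double branched cover of the complementary piece is a circle bundle over a surface with $r$ boundary components; gluing and reading off the invariants yields the stated symbols. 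That is the right skeleton, and you correctly isolate the two genuinely delicate points, namely the orientability of the base ($Oo$ versus $On$) and the coherence of slope conventions across the Conway spheres.

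One local claim is wrong as stated, and it matters because it is precisely the mechanism behind the orientability question you defer: the circle fibers of the cover of the base piece are \emph{not} lifts of a ``distinguished branch circle.'' In the pillowcase cover, the deck transformation is the elliptic involution, and a fibration of $T^2$ by circles of a fixed slope projects to a singular foliation of $S^2$ by circles together with \emph{arcs joining pairs of the four branch points}; correspondingly, in the base piece of the Montesinos decomposition the Seifert fibers are preimages of arcs running between the two strands of the link (each such fiber double covers an arc, folding at its endpoints on the branch set) and preimages of circles disjoint from the branch set. The covering involution preserves this fibration while reversing the orientation of the fibers it maps to themselves, and it is exactly this action that determines whether the quotient data produce an orientable or non-orientable base; your $g>0$ case is currently asserted rather than derived, and cannot be completed until the fiber description is corrected. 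With that repair, the rest of your outline goes through as in the standard references.
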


\subsection{$M_{n}$}

The manifolds studied in this paper, denoted $M_{n}$, $n \in \mathbb Z$, are defined in the introduction 
as the double covers of $S^3$ branched over the links $L_n$.  As seen in Figure~\ref{fig:Ln},
$L_n$ can be seen as a Montesinos link in two distinct ways, giving rise to two 
Seifert fibrations on $M_n$, one with the symbols $(Oo,0;\frac{1}{2},-\frac{1}{2},\frac{-2}{4n-1})$
and the other with the symbols $(On,1;\frac{4n-1}{2})$.

In~\cite[Theorem~2, Pages 111--112]{OrlikBook}, $M_{n}$ is shown belong to a class of manifolds
called  {\it prism manifolds}. 
There it was shown that $M_n$ has {\it exactly} two Seifert fibrations,
and that the fundamental group of $M_{n}$ is finite.
Therefore $M_n $ is a-toroidal.

	\begin{figure}[h!]
\centering
	\includegraphics[height = 5in]{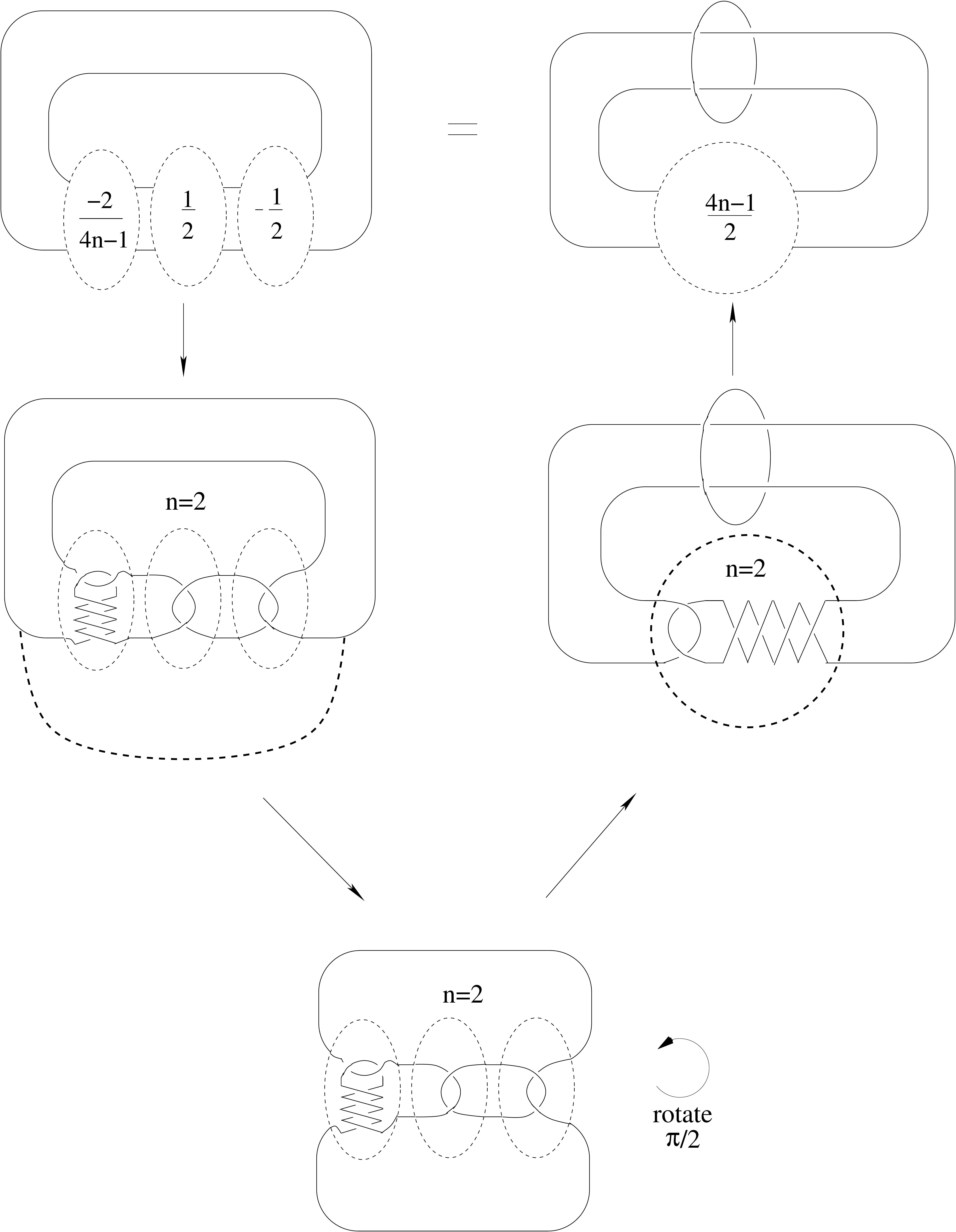}
	\caption{$L_{n}$}
	\label{fig:Ln}
	\end{figure}

We summarize the necessary information about $M_{n}$:

\begin{lem}
\label{lem:Mn}
For each $n \in \mathbb Z$, $M_{n}$ has exactly two Seifert fibrations, one 
with the Seifert symbols $(Oo,0;\frac{1}{2},-\frac{1}{2},\frac{-2}{4n-1})$
and the other with the symbols $(On,1;\frac{4n-1}{2})$.  $M_{n}$ is a-toroidal.
The manifolds $M_{n}$ are distinct, that is, for $n \neq n'$, $M_{n} \not\cong M_{n'}$.
\end{lem}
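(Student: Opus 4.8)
The plan is to obtain parts~(1) and~(2) of the lemma directly from the cited classification results and then to prove the distinctness statement~(3) by extracting from the two fibrations a homeomorphism invariant that depends on $n$. First I would read off the two fibrations from the two Montesinos presentations of $L_n$ displayed in Figure~\ref{fig:Ln}: applying Lemma~\ref{lem:montesinos} to each presentation yields the symbols $(Oo,0;\frac{1}{2},-\frac{1}{2},\frac{-2}{4n-1})$, whose base orbifold is $S^2$ with cone points of orders $2,2,|4n-1|$, and $(On,1;\frac{4n-1}{2})$, whose base orbifold is $\mathbb{RP}^2$ with a single cone point of order $2$. That these are the \emph{only} two fibrations, and that $\pi_1(M_n)$ is finite, is exactly the content of \cite[Theorem~2]{OrlikBook}, which identifies each $M_n$ as a prism manifold. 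Finiteness of $\pi_1(M_n)$ gives a-toroidality at once: an essential torus is incompressible and hence injects $\mathbb{Z}\oplus\mathbb{Z}$ into $\pi_1(M_n)$, which is impossible in a finite group.

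For the distinctness statement, the key point is that a homeomorphism $\phi\colon M_n \to M_{n'}$ carries Seifert fibrations to Seifert fibrations. Pushing the two fibrations of $M_n$ forward by $\phi$ produces two Seifert fibrations of $M_{n'}$; since by \cite[Theorem~2]{OrlikBook} the manifold $M_{n'}$ admits exactly two fibrations, $\phi$ must induce a bijection between the fibrations of $M_n$ and those of $M_{n'}$, and each matched pair has homeomorphic base orbifolds. This is the step where the uniqueness clause of Orlik's theorem is essential: for a general spherical space form there may be infinitely many inequivalent fibrations (as for lens spaces), and no such rigidity would be available.

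It then remains to match the bases. The two base orbifolds of $M_n$ are $S^2(2,2,|4n-1|)$ and $\mathbb{RP}^2(2)$, and their underlying surfaces $S^2$ and $\mathbb{RP}^2$ are not homeomorphic, so the induced bijection must send the $S^2$-based fibration of $M_n$ to the $S^2$-based fibration of $M_{n'}$. Hence there is an orbifold homeomorphism $S^2(2,2,|4n-1|)\cong S^2(2,2,|4n'-1|)$, which forces the multisets of cone orders to coincide, $\{2,2,|4n-1|\}=\{2,2,|4n'-1|\}$. Since $4n-1$ and $4n'-1$ are odd, neither equals $2$, so this gives $|4n-1|=|4n'-1|$, that is $4n-1=\pm(4n'-1)$. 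The $+$ sign yields $n=n'$, while the $-$ sign yields $n+n'=\frac{1}{2}$, which has no integer solution; therefore $n=n'$. Note that this argument covers \emph{all} pairs $n\neq n'$, including the degenerate value $|4n-1|=1$ at $n=0$, because the integers $|4n-1|$ are pairwise distinct. As an alternative leading to the same conclusion one may instead compare the unsigned Euler number $\frac{|4n-1|}{2}$ of the $(On,1;\frac{4n-1}{2})$ fibration, or the order of $\pi_1(M_n)$, each of which is determined by $|4n-1|$.

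The main obstacle is not the final number-theoretic step but the passage from \emph{homeomorphic manifolds} to \emph{matching Seifert data}. The whole argument hinges on the uniqueness of the two fibrations supplied by \cite{OrlikBook}; the substantive work is therefore in correctly reading off the two Montesinos structures from Figure~\ref{fig:Ln} via Lemma~\ref{lem:montesinos} and in invoking Orlik's classification, after which distinctness follows formally from the cone-order invariant.
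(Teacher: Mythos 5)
Your proposal is correct and takes essentially the same route as the paper: the two fibrations are read off the Montesinos presentations of $L_n$ via Lemma~\ref{lem:montesinos}, the count of fibrations and finiteness of $\pi_1(M_n)$ are quoted from \cite[Theorem~2]{OrlikBook}, and a-toroidality follows since a finite group contains no $\mathbb{Z}\oplus\mathbb{Z}$. The only difference is that you spell out the distinctness step (pushing fibrations through a putative homeomorphism and comparing cone orders of the base orbifolds), which the paper leaves implicit as a consequence of the uniqueness clause of Orlik's theorem; this is a detail filled in, not a different approach.
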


\subsection{Essential surfaces}

A surface $F$ in a Seifert manifold $M$ is called {\it horizontal} if at every point $p \in F$,
$F$ is transverse to the fiber.   Let $B$ be the base orbifold of $M$.  If $F$ is horizontal, %aqui cambie
then the  
projection $M \to B$ induces a branched cover $F \to B$.  The local degree at $p \in F$
is exactly the multiplicity of the Seifert fiber through $p$.
In particular, if the degree of the cover $F \to B$ is $d$, then 
		$$\chi(F) = d \chi^{orb}(B),$$
where $\chi$ is the Euler characteristic and 	$\chi^{orb}$ is the orbifold Euler characteristic
(see, for example,~\cite{scott}).
$F$ is called {\it vertical} if at every $p \in F$, $F$ is tangent to the fibers;
equivalently, $F$ is the preimage of a 1-manifold embedded in $B$.
See, for example, Jaco~\cite[VI.34]{jaco} for a proof of the following well known fact:
\begin{lem}
\label{lem:EssentialInSeifert}
If $F$ is an essential surface in a Seifert manifold $M$ with base orbifold $B$, 
then one of the following holds:
	\begin{enumerate}
	\item $F$ can be isotoped to be vertical, and $\chi(F) = 0$.
	\item $F$ can be isotoped to be horizontal, and for some integer $d \geq 0$,
	$\chi(F) = d \chi^{orb}(B)$.
	\end{enumerate} 
\end{lem}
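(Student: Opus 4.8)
The plan is to place $F$ in a controlled position with respect to the Seifert fibration of $M$ and then use essentiality to simplify how the fibers meet $F$. Let $\pi \colon M \to B$ denote the projection to the base orbifold. Away from the finitely many exceptional fibers $\pi$ is an honest $S^1$-bundle, and in general the fibers foliate $M$ by circles. First I would isotope $F$ so that it meets each exceptional fiber transversally in finitely many points and so that, away from the exceptional fibers, the foliation of $M$ by fibers restricts to a singular foliation on $F$ whose only singularities are generic tangencies (centers and saddles) of $F$ with the fibers. This is a standard general-position argument, and the local model $(z,t)\mapsto(z^{\alpha_i},t)$ near an exceptional fiber of multiplicity $\alpha_i$ dictates the allowed behavior there.

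Next I would remove the tangencies one at a time, using the hypothesis that $F$ is essential. An innermost center tangency bounds a disk on $F$ that is tangent to the fibers along its interior; since $F$ is incompressible this disk can be pushed across the fiber direction and the center removed by an isotopy. The saddle tangencies are the technical heart: after all centers are gone, each saddle is eliminated by a cut-and-paste (equivalently, innermost-disk / outermost-arc) argument applied to the circles and arcs of intersection of $F$ with nearby vertical annuli and tori, again invoking incompressibility and boundary-incompressibility, together with the hypothesis that $F$ is not boundary-parallel to discard the trivial pieces that arise. Throughout, one must verify that these isotopies are carried out compatibly across the exceptional fibers.

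Once no tangencies remain, $F$ is either everywhere transverse to the regular fibers or else contains regular fibers entirely. In the former case $F$ is horizontal, and the restriction $\pi|_F \colon F \to B$ is a branched covering of orbifolds of some degree $d \geq 0$, branched over the cone points, so by multiplicativity of the orbifold Euler characteristic $\chi(F) = d\,\chi^{orb}(B)$, as claimed (the local degree over a cone point of order $\alpha_i$ being exactly the fiber multiplicity). In the latter case each component of $F$ is saturated by fibers and hence equals the preimage $\pi^{-1}(\gamma)$ of an embedded $1$-manifold $\gamma \subset B$; such a surface is vertical, and since $M$ and $F$ are orientable its components are tori (over circles of $\gamma$) and annuli (over arcs of $\gamma$ meeting $\partial B$), so $\chi(F) = 0$.

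The main obstacle I anticipate is the systematic removal of the saddle tangencies: one must order the simplifications so that eliminating a saddle neither reintroduces centers nor creates new tangencies, and one must argue that the intersection circles which bound disks are inessential and therefore removable using only the three essentiality conditions. The remaining point requiring care is the bookkeeping around the exceptional fibers, ensuring that the local branching model is respected by the isotopies and contributes the correct degree in the horizontal case. For the full details I would follow Jaco~\cite[VI.34]{jaco}.
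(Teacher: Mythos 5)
Your outline is correct and coincides with the paper's treatment: the paper offers no proof of its own for this lemma, citing Jaco~\cite[VI.34]{jaco} for the standard fact, and your sketch (general position of $F$ with respect to the fibers, elimination of center and saddle tangencies using incompressibility, boundary-incompressibility and non-boundary-parallelism, then the dichotomy with $\chi(F)=d\,\chi^{orb}(B)$ via the induced branched cover in the horizontal case and $\chi(F)=0$ for saturated tori and annuli in the vertical case) is precisely the argument behind that citation, deferring the same technical details to the same source.
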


\subsection{Hyperbolic manifolds}
\label{subsec:hyperbolic}

By {\it hyperbolic manifold} $M$ we mean a complete finite volume Riemannian 3-manifold locally 
modeled on hyperbolic 3-space $\mathbb H^{3}$.   It is well known that $M$
is the interior of a compact manifold with boundary tori.  The Riemannian metric on $M$ is unique by
Mostow's Rigidity, and induces a volume form on $M$.  Integrating this form we obtain the {\it volume}
of $M$, denoted by $\vol[M]$.  A link $L \subset S^{3}$ is called {\it hyperbolic} if $S^{3} \setminus L$
is a hyperbolic manifold.  By the work of J\o rgensen and Thurston~\cite[Chapter~5]{thurston} (see, 
for example,~\cite{JT} for a detailed description) the set of hyperbolic volumes is well ordered.
Hence in any subset of hyperbolic manifolds there is a (not necessarily unique) manifold of least volume.  Cao and
Meyerhoff~\cite{CaoMeyerhoff} showed that the complement  of figure eight knot and its sister
are the smallest volume hyperbolic manifold with one cusp
(we note that the sister is not the exterior of a knot in $S^3$ and hence will play no role in this work).  
Agol~\cite{agol} showed that the exteriors of the Whitehead link
and the $-2,3,8$ pretzel link are the hyperbolic manifolds of least volume among all hyperbolic 
manifolds with at least two cusps.
%Recall that we denoted the Whitehead link by $W$ and the $-2,3,8$ pretzel link by $P$.
Recall that we denoted these links by $W$ and $P$, respectively.

\begin{notation}
\label{notation:v0}
We denote $\vol[S^{3} \setminus W]$ by $\vvv$.
\end{notation}

\subsection{Bundles over $S^1$}
\label{subsec:bundles}
A manifold is a {\it bundle over $S^1$} if it has the form 
$(F \times [0,1])/((x,0) \sim (\phi(x),1))$ where $F$ is a surface and
$\phi:F \to F$ a diffeomorphism; $\phi$ is called the {\it monodromy}
of the manifold and the manifold is denoted by $M_\phi$.
In this subsection we briefly review the structure of 3-manifolds that fiber
over $S^1$, due to Thurston.  We refer to the image of $F \times \{0\}$
as $F \subset M_\phi$.  Then $F$ is essential in $M_\phi$.

Assume that $\chi(F) < 0$.   
By the Thurston--Nielsen classification of surface homomorphisms, 
$\phi$ has one of the following forms:
\begin{enumerate}
\item Pseudo Anosov.
\item Periodic.
\item Reducible.
\end{enumerate}
We refer the reader to \cite{CassonBleiler} for details.  Then $M_\phi$
has one of the following forms:
\begin{enumerate}
\item When $\phi$ is pseudo Anosov, $M_\phi$ is hyperbolic (as described in
Subsection \ref{subsec:hyperbolic}).
\item When $\phi$ is periodic, $M_\phi$ is a Seifert manifold and $F$ is horizontal
(by construction).  
%Let $B$ be the base orbifold of $M_\phi$.
%The restriction of the projection $M_\phi \to B$ 
%to $F$ is a cover $F \to B$, branched along the singular
%points of $B$ (that is, points corresponding to exceptional fibers).  Let
%$b \in B$ be a singular point.  Then $b$ has index $m \geq 2$ if the corresponding
%exceptional fiber has multiplicity $m$.  Then at any point of the preimage of 
%$p$, the local degree is exactly $m$.
\item When $\phi$ is reducible, there is a collection of disjointly embedded essential circles 
$C \subset F$ so that $\phi(C) = C$.  Then the image of $C \times [0,1]$
in $M_\phi$ is a collection of disjointly embedded essential tori, say $T$.  
Denote the closures of the component of $M_\phi$ cut open along $T$ by $V_i$
($i=1,\dots,n$).  Then every $V_i$ fibers over $S^1$ with fiber 
$F \cap V_i$, and  for an appropriate choice of $C$, the monodromy $\phi|_{F \cap V_i}$ is either
pseudo Anosov or periodic; accordingly, $V_i$ is either hyperbolic or a Seifert manifold.
When dealing with reducible monodromy, we will always assume that $C$ was chosen so that 
$T$ and $V_{i}$ have these properties.

\end{enumerate}

\section{The knots $P_{n}$}
\label{sec:Pn}

Recall the definition of a twisted torus knot from Subsection~\ref{subsection:notation}.
% AQUI CAMBIE As explained there, a twisted torus knot is naturally embedded in an unknotted solid torus.
In this section we study the knots $P_{n}$.  Recall that $P_{n}$ was obtained from $P$
(the $-2,3,8$ pretzel link)
by $1/n$ Dehn surgery on the unknotted component.  The trace of the unknotted component is
an unknot in $S^{3}$.%, denoted below by $k$.

\begin{prop}
\label{prop:Pn}
The knot $P_{n}$ is the $T(5, 5n+1;2,1)$ twisted torus knot.  Moreover, $T(5,5n+1;2,1)$
naturally embeds in an unknotted solid torus $V$ and the trace of the unknotted component
of $P$ is a core of the complementary solid torus.
\end{prop}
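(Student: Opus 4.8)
The plan is to reinterpret the $1/n$ surgery defining $P_n$ as the result of twisting the trefoil component of $P$ inside the solid torus complementary to the unknotted component $U$, and then to recognize the resulting family as twisted torus knots. Since $U$ is unknotted, $V := S^3 \setminus N(U)$ is a solid torus, and the trefoil component $K$ of $P$ lies in its interior. First I would isotope the standard diagram of the $-2,3,8$ pretzel link into a normal form in which $U$ is manifestly a core of the complementary solid torus $S^3 \setminus V$, so that $K$ is displayed as a pattern inside $V$. The goal of this first step is to see $K \subset V$ as the twisted torus knot $T(5,1;2,1)$ sitting in a standard unknotted $V$: that is, $K$ should run $5$ times around $V$ longitudinally (lying essentially on $\partial V$ as a $(5,1)$ curve) together with one local full twist on two adjacent strands.

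As a consistency check on this identification I would verify two things. First, the winding number of $K$ in $V$, equivalently the linking number $\mathrm{lk}(K,U)$, should equal $5$; this is a direct computation from the pretzel diagram and pins down the number of strands in the twisted-torus picture, since $U$ will play the role of the braid axis of a $5$-braid. Second, the $n=0$ case must reproduce the trefoil, because $1/0$ surgery is trivial and $P_0 = K$. This is confirmed at the level of braids: $T(5,1;2,1)$ is the closure of $(\sigma_1\sigma_2\sigma_3\sigma_4)\,\sigma_1^2$, and three successive Markov destabilizations reduce this to the closure of $\sigma_1^3$, namely the trefoil.

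With the model in hand, I would invoke the Rolfsen twist: performing $1/n$ surgery on $U$ is equivalent to cutting $S^3$ along the disk bounded by $U$, inserting $n$ full twists, and regluing, leaving $S^3$ unchanged and re-embedding $K$. Because $K$ meets this disk in $5$ points, the surgery applies $n$ full twists to the $5$ strands of the torus-knot part of $K$. In braid terms a full twist on $5$ strands is the central element $(\sigma_1\sigma_2\sigma_3\sigma_4)^5$, so $n$ full twists carry the closure of $(\sigma_1\sigma_2\sigma_3\sigma_4)^{1}\sigma_1^2$ to the closure of $(\sigma_1\sigma_2\sigma_3\sigma_4)^{5n+1}\sigma_1^2$; the parameter $q$ increases from $1$ to $5n+1$ while the local two-strand twist is untouched. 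Hence $P_n = T(5,5n+1;2,1)$. All twisting is supported inside $V$, so $K$ remains a pattern in the unknotted solid torus $V$ throughout, and the trace of $U$ is exactly the core of the reglued complementary solid torus $S^3 \setminus V$, which gives the final assertion (after fixing the surgery sign convention so that positive $n$ yields $+5n$).

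The hard part will be the first step: carrying out the explicit sequence of isotopies that transforms the pretzel diagram of $P$ into the standard solid-torus picture, thereby certifying that $K$ really is $T(5,1;2,1)$ embedded in the standard way, rather than merely some trefoil that happens to wind $5$ times around $V$. Once this normal form is established, the surgery-as-twisting argument and the braid bookkeeping are routine.
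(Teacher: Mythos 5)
Your plan is structurally identical to the paper's proof: the paper establishes the $n=0$ case by an explicit sequence of diagram moves (its Figure~\ref{fig:TwistedTorusKnot}) exhibiting the trefoil component $k$ inside $V = S^3 \setminus N(U)$ as the twisted torus knot $T(5,1;2,1)$, with the trace of $U$ a core of the complementary solid torus, and then obtains all other $P_n$ by performing $n$ Dehn twists along $U$ --- exactly your Rolfsen-twist step. Your supporting computations are correct: the linking number of the two components of $P$ is indeed $\pm 5$ (the $-2$ tangle and the $8$ tangle contribute with opposite signs after orienting, giving $(8-(-2))/2 = 5$), the full twist $(\sigma_1\sigma_2\sigma_3\sigma_4)^5$ is central in $B_5$ so the bookkeeping $T(5,1;2,1) \mapsto T(5,5n+1;2,1)$ is sound, and the destabilization of $(\sigma_1\sigma_2\sigma_3\sigma_4)\sigma_1^2$ to $\sigma_1^3$ correctly confirms that $T(5,1;2,1)$ is a trefoil.

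The problem is that the step you defer as ``the hard part'' is not a detail to be filled in later --- it is the entire mathematical content of the proposition, and it is precisely what the paper's proof consists of. Your consistency checks are necessary but, as you yourself note, not sufficient: knowing that $k$ is a trefoil with $\mathrm{lk}(k,U) = \pm 5$ does not determine the isotopy class of the pair $(V,k)$, and the conclusion for $n \neq 0$ depends on the pattern being $T(5,1;2,1)$ embedded in the standard way (a closed $5$-braid with axis $U$), since twisting along $U$ acts on the actual pattern, not on its homology class or its knot type in $S^3$. So as written, your proposal is a correct reduction of the proposition to the normal-form claim, together with an accurate account of why that claim implies everything else --- but it is not yet a proof. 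To complete it you would have to produce the isotopy sequence taking the standard $-2,3,8$ pretzel diagram to the solid-torus picture, which is exactly what the paper supplies pictorially.
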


\begin{proof}
For $n=0$, see Figure~\ref{fig:TwistedTorusKnot}; the trace of the unknotted component is in boldface.
For all other $n$, perform an $n$ Dehn twist on the unknotted component of $P$ as given in the top right corner of that figure.
	\begin{figure}[h!]
\centering
	\includegraphics[height = 7in]{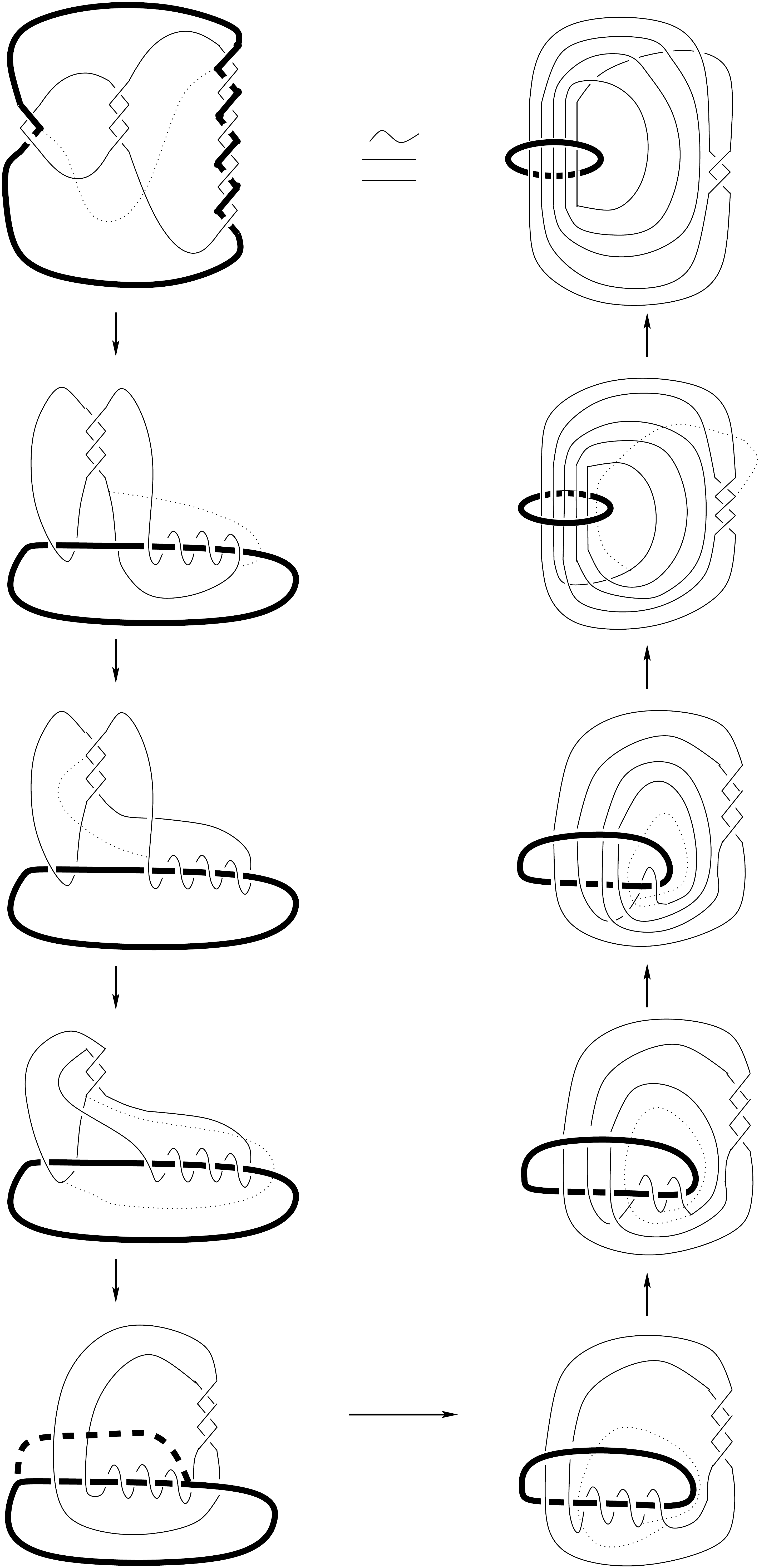}
	\caption{$P_{n}$ is a Twisted Torus Knot}
	\label{fig:TwistedTorusKnot}
	\end{figure}
\end{proof}

\section{Knots of small volume}
\label{sec:SmallVolumeKnots}

\begin{prop}
\label{prop:SmallVolumeKnots}
Let the knots $W_{n}$ and $P_{n}$ be as in Subsection~\ref{subsection:notation}, and let
$\vvv$ be as in Notation~\ref{notation:v0}.
The set 
$$\mathcal{L} = \{ L \subset S^{3} | L \mbox{ \rm is a hyperbolic link and } \vol[S^{3} \setminus L] < \vvv \}$$ 
consists entirely of knots, and all but 
finitely many of these knots are of the form $W_{n}$ or $P_{n}$.
\end{prop}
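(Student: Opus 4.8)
The plan is to combine J\o rgensen--Thurston compactness with Agol's theorem in a geometric convergence argument. The first assertion is immediate: a hyperbolic link with at least two components has at least two cusps, so by Agol's theorem (quoted above) its complement has volume at least $\vvv$; thus $\vol[S^3\setminus L]<\vvv$ forces $L$ to be a knot, and $\mathcal{L}$ consists of knots.

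For the main assertion I would argue by contradiction. Suppose infinitely many knots $K$ with $\vol[S^3\setminus K]<\vvv$ are of neither form $W_n$ nor $P_n$, and choose a sequence $(K_i)$ of such knots with pairwise distinct complements. By J\o rgensen--Thurston theory \cite{JT, thurston}, after passing to a subsequence the complements $S^3\setminus K_i$ converge geometrically to a finite-volume cusped hyperbolic manifold $N_\infty$, and for large $i$ the manifold $S^3\setminus K_i$ is a Dehn filling of $N_\infty$ with filling slopes tending to infinity and $\vol[S^3\setminus K_i]\nearrow\vol[N_\infty]$. Since the complements $S^3\setminus K_i$ are pairwise distinct yet each has a single cusp, $N_\infty$ cannot itself be one-cusped (a nontrivial filling of a one-cusped manifold is closed, so the only one-cusped filling is $N_\infty$ itself, contradicting distinctness); hence $N_\infty$ has at least two cusps. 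The volume monotonicity then gives $\vol[N_\infty]\le\vvv$, while Agol's theorem gives $\vol[N_\infty]\ge\vvv$. Therefore $\vol[N_\infty]=\vvv$, and the equality case of Agol's theorem identifies $N_\infty$ with $S^3\setminus W$ or $S^3\setminus P$; in particular $N_\infty$ has exactly two cusps, so each $S^3\setminus K_i$ is obtained by filling a single cusp of $N_\infty$.

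The final step, which I expect to be the main obstacle, is to identify these fillings. I would fix a homeomorphism of $N_\infty$ with $S^3\setminus W$ or $S^3\setminus P$, under which filling one cusp becomes Dehn surgery on one component of $W$ or of $P$. If that component is unknotted (the only option for $W$, and the unknotted component of $P$), then the ambient surgered manifold is $S^3$ exactly for the $1/n$ slopes, and the remaining component becomes $W_n$, respectively $P_n$, by the definitions in Subsection~\ref{subsection:notation}. If instead one fills the trefoil cusp of $P$, then producing a knot in $S^3$ would require a surgery on the trefoil yielding $S^3$; by the Gordon--Luecke theorem the only such surgery is the trivial meridional filling, which is excluded both because the slopes tend to infinity and because it returns the unknot rather than a hyperbolic knot. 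Since knots are determined by their complements (again Gordon--Luecke), it follows that for large $i$ each $K_i$ is some $W_{n_i}$ or $P_{n_i}$, contradicting the choice of the sequence and forcing all but finitely many knots of $\mathcal{L}$ into the two families. The delicate point throughout is this last identification: keeping careful track of which cusp is filled and invoking Gordon--Luecke both to exclude the trefoil cusp and to recover the knot from its complement.
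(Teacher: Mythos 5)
Your proposal follows essentially the same route as the paper's proof: Agol's theorem forces $\mathcal{L}$ to consist of knots, Gordon--Luecke turns infinitely many exceptional knots into infinitely many distinct one-cusped complements, J\o rgensen--Thurston plus Agol identifies the limiting (filled) manifold as $S^3 \setminus W$ or $S^3 \setminus P$, and the fillings are then identified with the families $W_n$ and $P_n$. Even the delicate final step---excluding the trefoil cusp of $P$ and non-$1/n$ slopes, where one must in principle worry that a filling could be homeomorphic to a knot complement without the natural ambient filling being $S^3$---is handled at the same level of detail as the paper's parenthetical ``since the knots are in $S^3$,'' so your argument matches the paper's in both structure and rigor.
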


\begin{proof}
By Agol \cite{agol}, the minimal volume hyperbolic manifolds with two cusps are $S^{3} \setminus W$ and $S^{3} \setminus P$,
and $\vol[S^{3} \setminus W] = \vol[S^{3} \setminus P] = \vvv$.  
Hence every link in 
$\mathcal{L}$ is a knot. 

Let $\hat{\mathcal{L}}$ be the knots in ${\mathcal{L}}$ that are not of the form $W_{n}$ or $P_{n}$.
Assume, for a contradiction, that $\hat{\mathcal{L}}$ is infinite.
Gordon and Luecke proved that knots are determined by their complements~\cite{GL}, hence
there are infinitely many distinct manifolds in $\{ S^{3} \setminus L | L \in \hat{\mathcal{L}}\}$.

%Thus $\{ S^{3} \setminus L | L \in \hat{\mathcal{L}}\}$ is an infinite collection of hyperbolic manifolds of
%bounded volumes.  By J\o rgensen and Thurston, 
%there exists a infinite sub-collection $\hat{\mathcal{L}}' \subset \hat{\mathcal{L}}$ 
%and a hyperbolic
%manifold $X$ with $c \geq 2$ cusps, so that any manifold in $\{S^{3} \setminus L | L \in \hat{\mathcal{L}}'\}$ is obtained by filling $c-1$ cusps
%of $X$.  

Thus $\{S^3 \setminus L | L \in \hat{\mathcal{L}} \}$ is an infinite
collection of hyperbolic manifolds, all of volume at most $\vvv$. By
J\o rgensen and Thurston,
there exists an infinite sub-collection $\hat{\mathcal{L}}' \subset
\hat{\mathcal{L}}$ and a hyperbolic manifold $X$ so that every manifold
in
$\{S^3 \setminus L | L \in \hat{\mathcal{L}}'\}$ is obtained from $X$ by
Dehn filling, and $\vol[X] \leq \vvv$  (see, for example,
\cite[Theorem~E.4.8]{BenedettiPetronio}).  Since the manifolds in
$\{S^3 \setminus L | L \in \hat{\mathcal{L}}'\}$ have cusps,
$X$ must have at least two cusps.  By Agol, $X \cong S^{3} \setminus
W$ or $X \cong S^{3} \setminus P$.

%We may further assume that $\vol[X] \leq \sup\{  \vol[S^{3} \setminus L] | L \in \hat{\mathcal{L}}' \} \leq \vvv$ (see, for example,
%\cite[Theorem~E.4.8]{BenedettiPetronio}).  We see that $X$ has at least two cusps and volume at most $\vvv$, so by
%Agol $X \cong S^{3} \setminus W$ or $X \cong S^{3} \setminus P$.  

It is easy to see that if a knot is obtained by filling one component
of $S^{3} \setminus W$ then it is of the form $W_{n}$.  If infinitely many non-trivial knots are obtained by filling a boundary component of $S^{3} \setminus P$ then 
(since the knots are in $S^{3}$) the boundary component filled must correspond to the unknotted component 
of $P$ and the slope filled must be of the form $1/n$
with respect to the usual meridian and longitude; hence the knot is of the form $P_{n}$.  This contradicts our assumption.
\end{proof}

\section{Proof of Theorem~\ref{thm:main}}
\label{sec:proof}

In this section we prove Theorem~\ref{thm:main}.

First we prove:

\begin{lem}
\label{lem:NotDoubleCover}
For all but finitely many $n$, $M_{n}$ is not the double cover of $S^{3}$ branched over 
a hyperbolic link $L$ with $\vol[S^{3} \setminus L] < \vvv$.
\end{lem}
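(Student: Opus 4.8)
The plan is to argue by contradiction using the finiteness and volume constraints already assembled. Suppose that infinitely many $M_{n}$ are double covers of $S^{3}$ branched over hyperbolic links $L$ with $\vol[S^{3}\setminus L] < \vvv$. By Proposition~\ref{prop:SmallVolumeKnots}, every such branch locus $L$ is a knot, and all but finitely many of these knots are of the form $W_{n}$ (a twist knot) or $P_{n}$. So after discarding finitely many exceptions, I would have infinitely many $M_{n}$ each realized as the double cover of $S^{3}$ branched over some $W_{m}$ or $P_{m}$. Since there are infinitely many such $M_{n}$ but each is obtained as a double cover, at least one of the two infinite families (the $W_{m}$'s or the $P_{m}$'s) must supply infinitely many of these branched covers.

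The next step is to understand what these double covers actually are. The double cover of $S^{3}$ branched over a knot $K$ depends only on $K$, and I would compute (or cite the standard fact) that the double branched cover of a twist knot $W_{m}$ is a lens space, and that the double branched cover of $P_{m}$ is the double branched cover of a twisted torus knot $T(5,5m+1;2,1)$ by Proposition~\ref{prop:Pn}. The crucial structural point is that the manifolds $M_{n}$ are prism manifolds: by Lemma~\ref{lem:Mn}, each $M_{n}$ is a Seifert manifold with finite fundamental group, admitting exactly two Seifert fibrations with the specific symbols listed. In particular $M_{n}$ is not a lens space (a lens space admits fibrations of a different character, and the base orbifolds recorded in Lemma~\ref{lem:Mn} are those of a prism manifold, not a lens space). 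This rules out the twist knots $W_{m}$ entirely: double branched covers of twist knots are lens spaces, which are never among the $M_{n}$.

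It remains to rule out the family $P_{m}$, and this is where the main obstacle lies. I must show that the double branched cover of $P_{m}$ (equivalently of $T(5,5m+1;2,1)$) cannot be $M_{n}$ for infinitely many pairs. Here I would exploit the rigidity of Seifert invariants supplied by Lemma~\ref{lem:Mn}: since each $M_{n}$ has \emph{exactly} two Seifert fibrations with explicitly determined symbols, any realization of $M_{n}$ as a double branched cover must reproduce one of these two symbol sets. Computing the double branched cover of the Montesinos/torus-knot presentation of $P_{m}$ via Lemma~\ref{lem:montesinos} yields Seifert invariants as an explicit function of $m$. The contradiction should come from comparing these against the constraint that the $M_{n}$ are mutually distinct (Lemma~\ref{lem:Mn}): a single family $\{P_{m}\}$ indexed by one integer parameter produces double branched covers indexed by $m$, and I would show these either fail to be prism manifolds of the required form, or else match up with only finitely many of the $M_{n}$, contradicting the supposition that infinitely many $M_{n}$ arise this way.

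The delicate part is the bookkeeping of Seifert invariants and orientations when identifying the double branched cover of $T(5,5m+1;2,1)$: twisted torus knots need not be Montesinos knots in general, so their double branched covers need not be Seifert fibered at all, and establishing whether or not these covers can be prism manifolds of the exact type $M_{n}$ requires care. I expect the cleanest route is to show directly that for all but finitely many $m$ the double branched cover of $T(5,5m+1;2,1)$ is \emph{not} a Seifert manifold with finite fundamental group (for instance by showing these knots are hyperbolic or have hyperbolic double branched covers for large $m$), so that such covers cannot be any $M_{n}$; combined with the elimination of the $W_{m}$ family, this completes the contradiction and proves the lemma.
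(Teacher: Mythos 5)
There is a genuine gap, and it sits exactly at the heart of the lemma: the elimination of the family $P_{m}$. Your treatment of the $W_{m}$'s is fine and matches the paper (twist knots are 2-bridge/Montesinos, their double branched covers are lens spaces, hence never $M_{n}$). But for the $P_{m}$'s you offer two routes, neither of which works as stated. The first route --- computing Seifert invariants of the double branched cover of $T(5,5m+1;2,1)$ via Lemma~\ref{lem:montesinos} --- is inapplicable, because that lemma only covers Montesinos links and the twisted torus knots $P_{m}$ are not presented as Montesinos links; you acknowledge this yourself, which collapses the approach. The second route --- ``show directly that for all but finitely many $m$ the double branched cover of $T(5,5m+1;2,1)$ is not Seifert with finite fundamental group, for instance by showing these knots are hyperbolic or have hyperbolic double branched covers'' --- is stated as an expectation, not proved, and it is precisely the hard content of the lemma. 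Worse, the parenthetical suggestion is a non sequitur: hyperbolicity of a knot does not imply hyperbolicity of its double branched cover (most $2$-bridge knots are hyperbolic, yet their double branched covers are lens spaces), so even establishing that the $P_{m}$ are hyperbolic for large $m$ (which is true, by Thurston, since $S^{3}\setminus P$ is hyperbolic) would not finish the argument.

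For contrast, the paper's proof avoids ever having to decide what the double branched covers of the $P_{m}$ are. Its key move is to realize all of them at once as Dehn fillings of a single manifold: $\widetilde{V}$, the double cover of the solid torus $V = S^{3}\setminus N(U)$ branched over the knotted component $k$. Because $k$ meets every meridian disk of $V$ in five points, $\widetilde{V}$ fibers over $S^{1}$ with fiber $F_{2,1}$ (the genus-$2$ surface with one boundary component), and the proof then runs the Thurston--Nielsen trichotomy on the monodromy. If it is pseudo-Anosov, $\widetilde{V}$ is hyperbolic and Thurston's Dehn surgery theorem kills all but finitely many fillings; if periodic, $\widetilde{V}$ is Seifert and an orbifold Euler characteristic computation (matching $\chi(F_{2,1})=-3$ against the bases of the two fibrations from Lemma~\ref{lem:Mn} with a fiber removed) leaves only finitely many $n$; if reducible, a JSJ argument plus a slope-intersection count ($\Delta(f,\alpha)=1$ and $\Delta(\partial F_{2,1},\alpha)\le 2$) again leaves finitely many fillings. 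Your hoped-for hyperbolicity claim would amount to asserting that the pseudo-Anosov case holds --- something the paper neither proves nor needs. To repair your proof you would have to supply an argument of comparable strength; as written, the $P_{m}$ case is unproven.
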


\begin{proof}
Note that for a given link $L \subset S^{3}$, the double cover of $S^{3}$ branched along $L$ 
is unique.  Hence, it suffices to show that there are finitely many links in $S^{3}$ with volume less than $\vvv$ whose 
double cover is $M_{n}$, for some $n$.  By Proposition~\ref{prop:SmallVolumeKnots} we only need to consider knots of the form $W_{n}$
and $P_{n}$.

Note that $W_{n}$ are twist knots, see Figure~\ref{fig:TwistKnot}.   These are very simple Montesinos links, and 
the double covers of $S^{3}$ branched along them are lens spaces, and hence
are not $M_{n}$.  Thus it suffices to show that only finitely many $M_{n}$'s are double covers of $S^{3}$ branched along
$P_{n}$.

Let $U$ be the unknotted component of the $-2,3,8$ pretzel link.  Let $V = S^{3} - N(U)$, and let $k \subset V$
be the knotted component of the $-2,3,8$ pretzel link, that is, $k$ is the
image of the twisted torus knot $T(5, 1; 2, 1)$ in $V$;
see the top right knot in Figure~\ref{fig:TwistedTorusKnot}.  
Note that $(S^{3}, P_{n})$ is obtained from $(V,k)$ by Dehn filling.  Let $\wt V$ be the double cover of $V$
branched along $k$.  Then the double cover of $S^{3}$ branched along $P_{n}$ is obtained from $\wt V$
by Dehn filling.  

It is easy to see from Figure~\ref{fig:TwistedTorusKnot} that  we may
choose a parametrization of $V$ as $D^2 \times S^1$ so that
 $k$ intersects every disk of the form $D^{2} \times \{pt\}$
transversally at exactly five points.   Hence $\wt V$ inherits a structure of a fiber bundle over $S^{1}$ with fiber the
double cover of $D^{2}$ branched over five points; it is easy to see that this fiber is the surface of genus 2 and one puncture,
denoted by $\f21$.

By Thurston's classification of bundles over $S^{1}$, the monodromy of $\wt V$ has one of the following forms
(recall Subsection~\ref{subsec:bundles}):
	\begin{enumerate}
	\item Pseudo Anosov.
	\item Periodic.
	\item Reducible. 
	\end{enumerate}
%We note that the last two possibilities are not mutually exclusive.

\bigskip

\noindent
If the monodromy is pseudo Anosov, then $\wt V$ is hyperbolic.  Then by Thurston's Dehn Surgery Theorem,
all but finitely many fillings are hyperbolic, and hence are not $M_{n}$.  We assume from now on that the monodromy is
not pseudo-Anosov.

\bigskip

\noindent
If the monodromy is periodic, then $\wt V$ is a Seifert manifold.  Then the Seifert fibration of $\wt V$ induces a Seifert
fibration on every manifold obtained from it by Dehn filling but one, and the core of the attached solid torus is a fiber.  
Assume that one of these manifolds is $M_{n}$.  Then the Seifert fibration
of $\wt V$ is obtained from a Seifert fibration of $M_{n}$ by removing a fiber.  By Lemma~\ref{lem:Mn}, 
$M_{n}$ admits exactly two Seifert fibrations, one with Seifert symbol $(On,1;\frac{4n-1}{2})$,
and the other with Seifert symbol $(Oo,0;\frac{1}{2},-\frac{1}{2},\frac{-2}{4n-1})$.
Note that the fiber removed may be exceptional or regular.  The possible
base orbifolds and their orbifold Euler characteristics after removing a fiber are: 
	\begin{enumerate}
	\item A M\"obius band with no singular points; $\chi^{orb}(\mbox{base})=0$.
	\item A M\"obius band with one singular point of index $2$; $\chi^{orb}(\mbox{base})=- \frac{1}{2}$.
	\item A disk with exactly three singular points, two of index 2 and one of index $4n-1$;
	$\chi^{orb}(\mbox{base})=-1 +{ \frac{1}{4n-1}}$.
	\item A disk with exactly two singular points, both of index 2;	$\chi^{orb}(\mbox{base})=0$.
	\item A disk with exactly two singular points, one of index 2 and one of index
	$4n-1$; $\chi^{orb}(\mbox{base})= -\frac{1}{2} + { \frac{1}{4n-1}}$.
	\end{enumerate}

Since $\f21$ is a fiber in a fibration over $S^{1}$, it is essential in $\wt V$.
By Lemma~\ref{lem:EssentialInSeifert},
after isotopy we may assume it is horizontal or vertical, but the latter is impossible
since $\chi(\f21) = -3 \neq 0$.  Hence we may assume that $\f21$ is horizontal and branch covers
the base orbifold, where the indices of the singular points are the degrees of the branching.
Applying Lemma~\ref{lem:EssentialInSeifert} again we see that the Euler %Hence the Euler 
characteristic of $\f21$ equals the degree of the cover (say $d$) times the orbifold Euler characteristic of
the base.  In Cases~(1) and ~(4) this is impossible.  

In Case~(3), the equation $\chi(\f21) = d \chi^{orb}(\mbox{base})$ means that for some $d$,
$-3 = d(-1 + \frac{1}{4n-1})$.  Solving for $n$ we see that $n = \frac{2d-3}{4d-12}$.  It is easy to see
that only finitely many integral values of $d$ correspond to integral values of $n$; hence only finitely many
$M_{n}$ can be obtained in this way, that is to say, for only finitely many values of $n$, there exists $m$,
so that $M_{n}$ is the double cover of $S^{3}$ branched over $P_{m}$.

In Case~(5), the equation $\chi(\f21) = d \chi^{orb}(\mbox{base})$ means that for some $d$,
$-3 = d(-\frac{1}{2} + \frac{1}{4n-1})$.  Solving for $n$ we see that $n = \frac{3d-6}{4d-24}$.  In this case too, it is easy to see
that only finitely many integral values of $d$ correspond to integral values of $n$; hence only finitely many
$M_{n}$ can be obtained in this way.

In Case~(2),  $\f21$ is the cover of a M\"obius band branched over one singular point of index 2.  Since
$\f21$ is orientable, the cover factors through the orientation double cover, which is an annulus with two singular points,
each of index 2.  Since the indices are 2, $\f21$ is a cover of the annulus of an 
even degree, say $2m$.
Hence $d = 4m$, and the equation $\chi(\f21) = d \chi^{orb}(\mbox{base})$ 
becomes: $-3 = 4 m \chi^{orb}(\mbox{base}) = 4 m \frac{-1}{2} =
-2m$.  This is impossible, since $m$ is an integer.

\bigskip

\noindent Finally, we consider reducible monodromy. 
In that case, the monodromy induces a torus decomposition on $\wt V$.
We denote the components of this decomposition by
$V_{1},\dots,V_{k}$.  Recall from Subsection~\ref{subsec:bundles} that for each $i$, $V_{i}$ is either a hyperbolic manifold or a Seifert manifold, 
with boundary a non-empty collection of tori that are incompressible in $V_{i}$ and in $\wt V$.
We may assume that the boundary of $\wt V$ is contained in 
$V_{1}$.  Hence $V_{1}$ has at least two boundary components.

There is a natural correspondence between Dehn fillings of $\wt V$ and Dehn fillings of $V_1$ along $\partial\wt V\subset\partial V_1$ 
%$\partial \wt V$ as a boundary component of $V_{1}$
 obtained by filling the same curve $\alpha$.  When filling
$V_{1}$, the components of $\partial V_1 \setminus \partial \wt V$ are not filled.  We denote corresponding fillings
by $\wt V(\alpha)$ and $V_{1}(\alpha)$.

If $V_{1}$ is hyperbolic, then for all but finitely many fillings $V_{1}(\alpha)$ is hyperbolic.  
When $V_{1}(\alpha)$ is hyperbolic, $\partial V_{1}(\alpha) = \partial V_{1} \setminus \partial \wt V$ is a 
non-empty collection of essential tori
in $V_{1}(\alpha)$.  Since these tori are also incompressible into $\wt V \setminus \mbox{int}(V_{1})$,
they are essential in $\wt V(\alpha)$; hence $\wt V(\alpha)$ is toroidal.
By Lemma~\ref{lem:Mn}, $M_{n}$ is a-toroidal.  Hence, only finitely many fillings of $\wt V$ give a manifold of the 
form $M_{n}$ in this case.

If $V_{1}$ is a Seifert manifold then for all but one slope $\alpha$, $V_1(\alpha)$ is a Seifert manifold;
recall that $\partial V_1(\alpha) \neq \emptyset$.
If $\partial V_{1}(\alpha)$ is incompressible in $V_{1}(\alpha)$,
then as above, $\wt V(\alpha)$ is toroidal and hence is not $M_{n}$.  The only Seifert manifold
that has a compressible boundary is the solid torus.  The only Seifert manifolds that can be filled
to give a solid torus are  solid tori with one fiber removed.  If the fiber is the core
of the solid torus then $V_{1}$ is a torus cross an interval; but by construction 
$\partial V_{1} \setminus \partial \wt V$
is essential in $\wt V$, and in particular it is not boundary parallel.  Hence, the fiber removed is a 
regular fiber in a fibration of the solid torus 
that has exactly one exceptional fiber.  Here is an alternate description: since the solid torus is
a Seifert manifold over the disk with at most one exceptional fiber, removing a fiber yields a Seifert
manifold over the annulus with at most one exceptional fiber.  Essentiality of $\partial
V_{1} \setminus \partial \wt V$ implies that there is an exceptional fiber.

On the other hand, by construction, $V_{1}$ fibers over $S^{1}$ with fiber $\f21 \cap V_{1}$ (recall
Subsection~\ref{subsec:bundles}).  
For convenience we note that since
$\partial \f21$ has exactly one component,
$\f21 \cap V_{1}$ is connected.  Since $V_{1}$ is not $T^{2} \times [0,1]$, it is not a fiber bundle over
$S^{1}$ with fiber an annulus.  Hence, $\f21 \cap V_{1}$ is not an annulus.  Since $\f21 \cap V_{1}$
is a fiber in a fibration over $S^{1}$, it is essential and can be isotoped to be vertical or horizontal;
since it is not an annulus, it is horizontal.
Hence the slope defined by $\f21 \cap \partial \wt V$ and the slope defined by the regular
fiber in the Seifert fibration of $V_{1}$ are distinct.

Since $V_{1}$ is a Seifert manifold over the annulus with one exceptional fiber, $V_{1}(\alpha)$
is a solid torus if and only if $\alpha$ intersects a regular fiber exactly once.
Denoting the regular fiber by $f$, this condition can be written as $\Delta(f,\alpha) = 1$,
where $\Delta(\cdot,\cdot)$ is the absolute value of the algebraic intersection
number on $\partial \wt V$.

On the other hand,  $\partial \f21$ projects to the boundary of the meridian of $V$.
Recall that $V = S^{3} \setminus \mbox{int}(N(U))$, where $U$ denotes the
unknotted component of the $-2,3,8$ pretzel link.  We obtain $P_{n} \subset S^{3}$ from
$k \subset V$ by Dehn filling; since the ambient manifold is $S^{3}$ the meridian
of the attached solid torus intersects the meridian of $V$ at exactly one point, say $x$.
By construction, the preimage of the meridian disk of
$V$ is $\f21$; thus the preimage of the 
meridian of $V$ is $\partial \f21$.  The preimage of the meridian
of the solid torus attached to $V$ is (one or two) meridians of the solid torus attached 
to $\wt V$, defining the slope $\alpha$.
Hence, $\partial \f21 \cap \alpha$ is contained in the preimage of $x$.
The preimage of $x$ under a double cover consists of two points.
We conclude that $\Delta(\partial \f21,\alpha) \leq 2$.

Hence there are two distinct slopes on $\partial \wt V$, denoted $f$
and $\partial \f21$, so that $\alpha$, the slope filled, fulfills the following:
\begin{enumerate}
\item $\Delta(f,\alpha) = 1$, and
\item $\Delta(\partial \f21,\alpha) \leq 2$.
\end{enumerate} 

It is an easy exercise to show that there are only finitely many slopes that fulfill these
two conditions simultaneously.

These completes the proof of Lemma~\ref{lem:NotDoubleCover}.
\end{proof}

We now prove Theorem~\ref{thm:main}.  Since $M_{n}$ is the double cover of $S^{3}$ branched
over $L_{n}$ and $\vol[S^{3} \setminus L_{n}] = \vvv$, $\lv[M_{n}] \leq 2\vvv$.
$\lv[M_{n}] < 2\vvv$ if and only if one of the following holds:
	\begin{enumerate}
	\item $M_{n}$ is a double cover of $S^{3}$ branched over a hyperbolic link $L$ with
	$\vol[S^{3} \setminus L] < \vvv$.
	\item $M_{n}$ is the $p$-fold cover of $S^{3}$, branched over a hyperbolic link $L$ with
	$\vol[S^{3} \setminus L] < 2\vvv/p$, for $p \geq 3$.
	\end{enumerate}
	
Lemma~\ref{lem:NotDoubleCover} shows that there are only finitely many manifolds $M_{n}$ in Case ~(1).

In Case~(2), we only consider links $L$ with $\vol[S^{3} \setminus L] < 2\vvv/p \leq 2\vvv/3$.
We claim that this set is finite.  To see that, assume it is infinite.  Recall again  that hyperbolic links with $\vol[S^3\setminus L]\leq 2\vvv/3<\vvv$ are knots, as mentioned in Proposition~\ref{prop:SmallVolumeKnots}. Hence, by Gordon and Luecke~\cite{GL} there are infinitely many $S^3\setminus L$ with $\vol[S^3\setminus L]\leq 2\vvv/3.$ Then, as in the proof of Proposition~\ref{prop:SmallVolumeKnots}, we have an infinite sub-collection of the set $\{S^3\setminus L:\textrm{ L is hyperbolic and }\vol[S^{3} \setminus L] \leq 2\vvv/3\}$ and every manifold in this sub-collection is obtained by filling a hyperbolic manifold with at least two cusps and volume at most $2\vvv/3.$
%Then (similar to the argument of Proposition~\ref{prop:SmallVolumeKnots})
%J\o rgensen and Thurston show that this set is obtained by filling a hyperbolic manifold
%with at least two cusps and volume at most $2\vvv/3$. 
 But Agol~\cite{agol} shows that any hyperbolic
manifold with at least two cusps has volume at least $\vvv$, contradicting our assumption there were infinitely many such links.   Fix a link $L$ with  $\vol[S^{3} \setminus L] < 2\vvv/p \leq 2\vvv/3$.
To obtain manifolds $M$ with $\lv[M] < 2\vvv$ as a cover branched over $L$, we consider
covers of degree $p < 2\vvv/\vol[S^{3} \setminus L]$.    (We note that by Cao and
Meyerhoff~\cite{CaoMeyerhoff} $\vol[S^{3} \setminus L] \geq 2$, and hence $p = 3$.)  
By Lemma~\ref{lem:FiniteleManyCovers},
there are only finitely many manifolds that cover $S^{3}$
branched over $L$ and of bounded degree, and we conclude that in Case~(2) there
are only finitely many manifolds.

This completes the proof of Theorem~\ref{thm:main}.

\bibliographystyle{plain} % 
\bibliography{JairYoav} %
\end{document}